\documentclass[a4paper,11pt]{amsart}
\usepackage{lmodern}
\usepackage{amsmath,amsfonts,amssymb,mathrsfs,amsthm,  cleveref, enumitem}

%Next line uses Geometry package to set left & right margins and space for marginal notes
%\usepackage[inner=1.1in,outer=1.1in,height=9.3in]{geometry}

%Theorem Names:

\newtheorem{theorem}[subsection]{Theorem}
\newtheorem{lemma}[subsection]{Lemma}
\newtheorem{cor}[subsection]{Corollary}
\newtheorem{definition}[subsection]{Definition}
\newtheorem{prop}[subsection]{Proposition}

%New Commands

\newcommand{\bC}{{\mathbb{C}}}
\newcommand{\bN}{{\mathbb{N}}}

\begin{document}

% Declarations for Front Matter

\title{Interpolation polynomials and linear algebra}

\author[Khovanskii]{Askold Khovanskii}
\author[Singla]{Sushil Singla}
\author[Tronsgard]{Aaron Tronsgard}

\maketitle

%\makeatletter
%\def\l@subsection{\@tocline{2}{0pt}{1.5pc}{5pc}{}}
%\makeatother

\begin{abstract}
We reconsider the theory of Lagrange interpolation polynomials with multiple interpolation points and apply it to linear algebra. For instance, $A$ be a linear operator satisfying a degree $n$ polynomial equation $P(A)=0$. One can see that the evaluation of a meromorphic function $F$ at $A$ is equal to $Q(A)$, where $Q$ is the degree $<n$ interpolation polynomial of $F$ with the the set of interpolation points equal to the set of roots of the polynomial $P$. 
\par In particular, for $A$ an $n \times n$ matrix, there is a common belief that for computing $F(A)$  one has to reduce $A$ to its Jordan form. Let $P$ be the characteristic polynomial of $A$. Then by the Cayley-Hamilton theorem, $P(A)=0$. And thus the matrix $F(A)$ can be found without reducing $A$ to its Jordan form.  Computation of the Jordan form for $A$ involves many extra computations. In the paper we show that it is not needed. One application is to compute the matrix exponential for a matrix with repeated eigenvalues, thereby solving arbitrary order linear differential equations with constant coefficients.
\end{abstract}

%\tableofcontents

\setlength{\parindent}{0pt}
\setlength{\parskip}{1.6ex}

\pagestyle{headings}

%%%%%%%%%%%%%%%%%%%%%%%%%
\section{Introduction}

Interpolation polynomials with multiple interpolation points are widely used in applied mathematics under the name Hermite--Lagrange interpolation polynomials for approximating functions or other data sets by polynomials, see for example Chapter 3 in \cite{numeric}. But such polynomials are useful in pure mathematics as well. For example, one can construct Galois theory including the problem of solvability by radicals using Lagrange interpolation polynomials with simple roots as the main tool (see \cite{prof}).

One may guess that Lagrange interpolation polynomials with multiple interpolation points also have applications in pure mathematics. In the paper, we present our reconstruction of the theory of interpolation polynomials with multiple interpolation points and its applications to linear algebra.

This paper was written by Sushil Singla and Aaron Tronsgard who attended Askold Khovanskii's course on Topological Galois theory at the Fields Institute during the Fall, 2021. They brought to life a sketch of the theory presented on the course. 

Let $\mathbb K$ be a field of characteristic zero. Let $\mathbb K[x]$ be the polynomial algebra over $\mathbb K$. Let $\{\lambda_1,\dots, \lambda_k\}=\Lambda\subseteq \mathbb K$ be a set of $k$ distinct elements. For all $1\leq j\leq k$, let $m_j\in \bN$ be a natural number associated with $\lambda_j$ such that $\sum\limits_{j=1}^k m_j=n$.

\begin{definition} A polynomial $L\in\mathbb K[x]$ of degree less than $n$  is called the  \emph{Lagrange interpolation polynomial} with interpolation points $\lambda_1. \dots, \lambda_k$ with multiplities $m_1, \dots, m_k$ and the interpolation data $$c_1^{(0)},\dots,c_1^{(m_1-1)}, \dots, c_k^{(0)},\dots,c_k^{(m_k-1)}$$ if for every $\lambda_j\in\Lambda$ and $0\leq m< m_j$, we have $$L^{(m)}(\lambda_j)=c_j^{(m)},$$ where $L^{(0)}(x)=L(x)$ and for $m>0$, $L^{(m)}(x)$ denotes the $m^{th}$ derivative of $L$.
\end{definition}

We are justified in our language, defining the Lagrange interpolation polynomial due to the following.

\begin{theorem}\label{main} The Lagranage interpolation polynomial with given interpolation points and interpolation data exists and is unique.
\end{theorem}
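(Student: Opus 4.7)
The plan is to recast the theorem as the statement that a certain linear map between two $n$-dimensional $\mathbb{K}$-vector spaces is a bijection. Let $V = \mathbb{K}[x]_{<n}$ denote the space of polynomials of degree less than $n$, which has dimension $n$, and define the linear evaluation-of-derivatives map $T : V \to \mathbb{K}^n$ by
\[
T(L) = \bigl(L^{(m)}(\lambda_j)\bigr)_{1 \le j \le k,\ 0 \le m < m_j},
\]
with the coordinates ordered as in the statement of the theorem. Existence and uniqueness of the Lagrange interpolation polynomial for every choice of interpolation data is precisely the assertion that $T$ is a bijection. Since $\dim V = n = \dim \mathbb{K}^n$, by rank-nullity it suffices to prove that $T$ is injective; existence will then come for free.

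For injectivity, the approach is to show $\ker T = \{0\}$. Suppose $L \in V$ satisfies $L^{(m)}(\lambda_j) = 0$ for all $j$ and all $0 \le m < m_j$. The first step is to argue that each $\lambda_j$ is a root of $L$ of multiplicity at least $m_j$, equivalently that $(x - \lambda_j)^{m_j}$ divides $L$ in $\mathbb{K}[x]$. This follows from the Taylor expansion of $L$ at $\lambda_j$, which is available in characteristic zero since the factorials involved are invertible: $L(x) = \sum_{m \ge 0} \frac{L^{(m)}(\lambda_j)}{m!}(x-\lambda_j)^m$, so killing the first $m_j$ derivatives at $\lambda_j$ removes the first $m_j$ terms of this expansion. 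Because the $\lambda_j$ are pairwise distinct, the factors $(x-\lambda_j)^{m_j}$ are pairwise coprime, so their product $\prod_{j=1}^{k}(x-\lambda_j)^{m_j}$, a polynomial of degree exactly $n$, divides $L$. But $\deg L < n$, which forces $L = 0$.

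The only real obstacle is the bookkeeping step that links vanishing of consecutive derivatives at a point to divisibility by a power of a linear factor, and then packages distinct interpolation points into pairwise coprime factors; everything else is a dimension count. It is worth remarking that this argument gives existence non-constructively, which is all that is needed for the linear-algebra applications in the rest of the paper, though an explicit Hermite--Lagrange formula can be written down afterwards if desired.
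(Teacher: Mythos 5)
Your proof is correct and is essentially the same argument as the paper's: both reduce the problem to a square linear system (equivalently, a linear map between $n$-dimensional spaces) and show the homogeneous system has only the trivial solution because a degree $<n$ polynomial with $n$ roots counted with multiplicity vanishes. You are slightly more careful than the paper in justifying, via the Taylor expansion in characteristic zero, why vanishing of the first $m_j$ derivatives at $\lambda_j$ forces divisibility by $(x-\lambda_j)^{m_j}$, but the structure of the argument is identical.
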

\begin{proof} Let $L(x)=\sum\limits_{i=0}^{n-1} a_i x^i$ be polynomials with undetermined cofficients $a_0, \dots, a_{n-1}$. Now determining an interpolation polynomial is equivalent to solving the $n$ equations $$L^m(\lambda_j)=c_j^{(m)}.$$
This system of equations will have a unique solution provided the corresponding homogenous system $$L^m(\lambda_j)=0$$ has only the trivial solution.

A solution of the corresponding homogeoneous equation is a polynomial of degree less than $n$ such that for every $\lambda_j\in\Lambda$ and $0\leq m< m_j$, we have $L^{(m)}(\lambda_j)=0$ i.e. a polynomial $L$ which has roots $\lambda_1, \dots, \lambda_k$ with multiplities $m_1, \dots, m_k$. By definition, $\sum\limits_{j=1}^k m_j=n$. Clearly the zero polynomial satisfies these conditions. And this is only solution because if a polynomial of degree less than $n$ has $n$ roots counted with multiplicity, then it is identically zero.
\end{proof}

Note that if all of the multiplicities are equal to 1, we have an explicit formula for the Lagrange interpolation polynomial:
\begin{equation} \label{explicit} L(x) = \sum_{j=1}^n c_j \prod_{i \neq j} \frac{x - \lambda_i}{\lambda_j - \lambda_i} \end{equation}

It will be useful for us to specify the interpolation data by giving function values, rather than as a list of points. We may also want to specify the interpolation points via eigenvalues of a linear transformation. To this end, we make the following definitions.

\begin{definition} Let $Q: \mathbb{K}\rightarrow \mathbb{K}$ be a polynomial. The Lagrange interpolation polynomial of $Q$ with the interpolation points $\lambda_1, \dots, \lambda_k$ with multiplicities $ m_1, \dots, m_k$ is defined as the unique polynomial $L$ of degree less than or equal to $n := m_1+\dots+m_k$ such that for all $1\leq j\leq k$ and $0\leq m< m_j$, we have $$L^{(m)}(\lambda_j) = Q^{(m)}(\lambda_j).$$
\end{definition}

\begin{definition} Let $f: U\rightarrow \mathbb{C}$ be a function defined on an open subset $U$ of $\bC$. The Lagrange interpolation polynomial of $f$ with the interpolation points $\lambda_1, \dots, \lambda_k$ with multiplicities $m_1, \dots, m_k$ is defined as the unique polynomial $L$ of degree less than $n := m_1+\dots+m_k$ such that for all $1\leq j\leq k$ and $0\leq m< m_j$, we have $$L^{(m)}(\lambda_j) = f^{(m)}(\lambda_j),$$ where  $f^{(0)}(x)=f(x)$ and for $m>0$, $f^{(m)}(x)$ denotes the $m^{th}$ derivative of $f$.
\end{definition}

\textbf{Example 1.1}. If $f: U\rightarrow \mathbb{C}$ is an $(m-1)$-times differentiable function. Then for any $x_0\in U$, the Lagrange interpolation polynomial $L$ of $f$ with one interpolation point $x_0$ of multiplicity $m$ coincides with the degree $m-1$ Taylor polynomial of $f$ at the point $x_0$, that is, we have, $$L(x) = f(x_0)+f^{(1)}(x_0)(x-x_0)+\dots+\dfrac{1}{(m-1)!}f^{(m-1)}(x_0)(x-x_0)^{m-1}.$$

One very classical result on Taylor polynomials is the Lagrange remainder theorem. It states that if $f$ is an $n$-times differentiable function and $T_{n-1}$ denotes the degree $n-1$ Taylor polynomial of $f$ at point $x_0$, then for any $x\in\mathbb R$ there exists $\xi$ between $x$ and $x_0$ such that $$f(x) - T_{n-1}(x) =  \frac{f^{(n)} (\xi)}{n!} (x - x_0)^{n}.$$

It is a natural question whether we have an analogous result for the general interpolation polynomials. The answer is a resounding yes.

\begin{theorem}
	Let $f(x)$ be an $n$-times differentiable function. Let $L$ be its interpolation polynomial with interpolation points $\lambda_1, \ldots, \lambda_k$ and multiplicities $m_1, \ldots, m_k$ such that $\sum\limits_{j=1}^n m_j = n$. Then for $x_0 \in \mathbb{R}$,  there exists $\xi$ inside the convex hull $U$ formed by the interpolation points such that
	\[f(x_0) - L(x_0) = \frac{f^{(n)} (\xi)}{n!} (x_0 - \lambda_1)^{m_1} \ldots (x_0 - \lambda_k)^{m_k}. \]
\end{theorem}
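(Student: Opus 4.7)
The plan is to mimic the classical proof of Lagrange's remainder theorem, replacing ordinary Rolle's theorem with its generalization accounting for multiplicities. First I dispose of the trivial case: if $x_0 = \lambda_j$ for some $j$, then both sides of the claimed identity vanish, the left side because $L(\lambda_j) = f(\lambda_j)$ by the interpolation condition, and the right side because the factor $(x_0 - \lambda_j)^{m_j}$ is zero. So I assume $x_0 \notin \{\lambda_1, \dots, \lambda_k\}$, in which case the monic degree-$n$ polynomial $W(x) := (x - \lambda_1)^{m_1} \cdots (x - \lambda_k)^{m_k}$ is nonzero at $x_0$.

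Next I would introduce the auxiliary function
$$g(x) := f(x) - L(x) - K \cdot W(x), \qquad K := \frac{f(x_0) - L(x_0)}{W(x_0)},$$
so that $g(x_0) = 0$ by construction. By the interpolation hypothesis $(f - L)^{(m)}(\lambda_j) = 0$ for all $0 \le m < m_j$, and since $W$ has a zero of order exactly $m_j$ at $\lambda_j$, the function $g$ has a zero of order at least $m_j$ there. Counted with multiplicity, $g$ therefore has at least $m_1 + \dots + m_k + 1 = n+1$ zeros in the convex hull of $\{x_0, \lambda_1, \dots, \lambda_k\}$; this is the set on which one should read the theorem's conclusion, since to handle the case in which $x_0$ lies outside the convex hull of the interpolation points we need the point $\xi$ to live in a region that also contains $x_0$.

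I would then invoke the generalized Rolle's theorem: if a sufficiently differentiable real function has $N$ zeros counted with multiplicity in a compact interval, then its derivative has at least $N-1$ zeros in the same interval. Applying this $n$ times to $g$ yields a point $\xi$ in the convex hull above at which $g^{(n)}(\xi) = 0$. Because $\deg L < n$, we have $L^{(n)} \equiv 0$, and because $W$ is monic of degree $n$, $W^{(n)} \equiv n!$; hence
$$0 = g^{(n)}(\xi) = f^{(n)}(\xi) - K \cdot n!,$$
so $K = f^{(n)}(\xi)/n!$. Substituting this value into the definition of $K$ yields exactly the claimed identity.

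The main obstacle is the careful verification of the generalized Rolle statement with multiplicity counts. One must check that a zero of order $m \ge 1$ of $g$ at a point $a$ forces a zero of order at least $m-1$ of $g'$ at $a$, while classical Rolle contributes one extra zero of $g'$ strictly between each pair of adjacent distinct zeros of $g$; these combine to show that each differentiation decreases the total multiplicity-counted zero count by exactly one. Once that lemma is in hand, the remainder of the argument is entirely formal.
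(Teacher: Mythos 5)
Your proof is essentially the same as the paper's: the auxiliary function $g = f - L - K\,W$ is exactly $f - L_0$ where $L_0$ is the paper's augmented interpolation polynomial with the extra simple node at $x_0$, and both arguments then count $n+1$ multiplicity-weighted zeros and apply generalized Rolle $n$ times. You are in fact slightly more careful than the paper on two points it glosses over: the degenerate case $x_0 \in \{\lambda_1,\dots,\lambda_k\}$ (where the constant $C$ would be $0/0$) and the fact that $\xi$ should be located in the convex hull of $\{x_0,\lambda_1,\dots,\lambda_k\}$ rather than of the interpolation points alone.
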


\begin{proof}
	Consider the interpolation polynomial $L_0$ for $f$ consisting of the same data as $L$, with the additional interpolation point $\lambda_{k+1} = x_0$ with multiplicity one. We write
	$$ L_0 (x) = L(x) + C (x - \lambda_1)^{m_1} \ldots ( x - \lambda_k)^{m_k} $$
	with $$C = \frac{f(x_0) - L(x_0)}{(x_0 - \lambda_1)^{m_1} \ldots (x_0 - \lambda_k)^{m_k}}.$$
	
	Now, consider the function $f(x) - L_0(x)$. This function has at least $n+1$ roots in $U$, counting with multiplicity. By Rolle's theorem, this implies that the $n$-th derivative $(f - L_0)^{(n)}$ has at least one root in $U$, call it $\xi$. Moreover, we compute $$(f - L_0)^{(n)} (x) = f^{(n)}(x) - C n !.$$
	
	And therefore, we have $C = \frac{f^{(n)}(\xi)}{n!}$. And since $$f(x_0) - L(x_0) = C (x_0 - \lambda_1)^{m_1} \ldots (x_0 - \lambda_k)^{m_k},$$ this implies that $$f(x_0) - L(x_0) = \frac{f^{(n)}(\xi)}{n!}(x_0 - \lambda_1)^{m_1} \ldots (x_0 - \lambda_k)^{m_k}.$$
\end{proof}

In Section \ref{section3}, we show the application of Lagrange interpolation polynomial in computing functions of matrices. From Theorem \ref{4}, it follows that if $A$ is a  $n\times n$ matrix with the eigenvalues $\lambda_1, \ldots, \lambda_k$ with multiplicities $m_1, \ldots, m_k$, then for a rational function without poles at $\lambda_j$ for all $1\leq j\leq k$ or entire function $f$, we have $f(A)=Q(A)$ where $Q$ is the Lagrange interpolation polynomial of $f$ with interpolation points $\lambda_1, \dots, \lambda_k$ and multiplicities  $m_1, \dots, m_k$. And application of Lagrange interpolation polynomials in computing inverse of matrices and solution of homogeneous linear differential equations is shown. In Section \ref{section2}, principle Lagrange resolvents are defined and its applications are shown. Theorem \ref{neww} shows that for computation of matrices, it is enough to compute principle Lagrange resolvents. Finally, we end our discussions with application to linear algebra, specifically we provide a proof that all matrices can be put into Jordan normal form. In Section \ref{remarks}, a few remarks are mentioned.

%%%%%%%%%%%%%%%%%%%%%%%%%
\section{Computation of functions of matrices}\label{section3}

If interpolation points are roots of a polynomial, we have the folllowing proposition.

\begin{prop} Let $T\in\mathbb{K}[x]$ be a polynomial of degree $n$ with $k$ distinct roots $\lambda_1, \dots, \lambda_k$ with multiplitcities $m_1, \dots, m_k$ and $\sum\limits_{j=1}^k m_j=n$.

Then for any polynomial $Q\in\mathbb{K}[x]$ of degree at least $n$, a polynomial $L$ is Lagrange interpolation polynomial of $Q$ with interpolation points equal to roots of $T$ with corresponding multiplicities if and only if $$Q-L\equiv 0\ (mod\ T).$$
In other words, the Lagrange polynomial $L$ of $Q$ with the above interpolation data is the remainder of $Q$ by $T$.
\end{prop}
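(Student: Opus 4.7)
The plan is to use polynomial division by $T$ and invoke the uniqueness part of Theorem~\ref{main}. First I would write $Q = T \cdot S + R$ where $\deg R < n$, using the division algorithm in $\mathbb{K}[x]$. The goal is then to identify $R$ with the Lagrange interpolation polynomial $L$ of $Q$, from which both directions of the ``if and only if'' will follow by uniqueness.

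The key computation is to verify that $R^{(m)}(\lambda_j) = Q^{(m)}(\lambda_j)$ for every $1 \leq j \leq k$ and every $0 \leq m < m_j$. Since $R = Q - T \cdot S$, this reduces to showing $(T \cdot S)^{(m)}(\lambda_j) = 0$ in that range. Here I would apply the Leibniz rule:
\[
(T \cdot S)^{(m)}(\lambda_j) = \sum_{i=0}^{m} \binom{m}{i} T^{(i)}(\lambda_j)\, S^{(m-i)}(\lambda_j).
\]
Because $\lambda_j$ is a root of $T$ of multiplicity $m_j$, we have $T^{(i)}(\lambda_j) = 0$ for all $0 \leq i < m_j$, and in particular for all $0 \leq i \leq m < m_j$. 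Hence every term in the sum vanishes, giving $(T \cdot S)^{(m)}(\lambda_j) = 0$ as required. This step is the main (and essentially only) technical point; it is routine but requires the observation that characteristic zero is not actually needed here, only the definition of root multiplicity in terms of derivatives.

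With that established, $R$ satisfies exactly the conditions defining the Lagrange interpolation polynomial of $Q$ with the specified interpolation data, and since $\deg R < n$, Theorem~\ref{main} gives $R = L$. For the forward direction, if $L$ is the Lagrange interpolation polynomial of $Q$, then by uniqueness $L = R$, so $Q - L = T \cdot S \equiv 0 \pmod{T}$. For the converse, if $Q - L \equiv 0 \pmod{T}$ with $\deg L < n$, then $L$ equals the remainder $R$ upon division of $Q$ by $T$, and we just showed $R$ is the Lagrange interpolation polynomial. This completes both implications and simultaneously identifies $L$ as the remainder, which is the concluding ``in other words'' claim.
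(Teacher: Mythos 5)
Your proposal is correct and follows essentially the same route as the paper: divide $Q$ by $T$, use the Leibniz rule together with $T^{(i)}(\lambda_j)=0$ for $i<m_j$ to see that the remainder matches the interpolation data of $Q$, and conclude by the uniqueness statement of Theorem~\ref{main}. If anything, your write-up is slightly more careful than the paper's (you include the binomial coefficients in the Leibniz expansion and treat the $m=0$ case explicitly).
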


\begin{proof} Let $L\in\mathbb K[x]$ be remainder of $Q$ by $T$ i.e. there exists $S\in\mathbb K[x]$ such that $Q=TS+L$ and degree of $L$ is less than or equal to $n$.  For $1\leq j\leq k$ and $1\leq m< m_j$, we have $Q^{(m)}(\lambda_j) = \sum\limits_{i=1}^{m} T^{(i)}(\lambda_j)S^{(m-i)}(\lambda_j)+L^{(m)}(\lambda_j)$. Since $T^{(i)}(\lambda_i)=0$ for all $1\leq i\leq m$, we get $Q^{(m)}(\lambda_j) =L^{(m)}(\lambda_j)$. And by the definition and uniqueness of the Lagrange interpolation polynomial, we get the result.
\end{proof}

As an immediate application of the above proposition, we get the following theorem related to the computation of polynomials of operators.

\begin{theorem}\label{1} Let $A$ be a linear operator on a vector space $V$ over $\mathbb K$ such that $A$ satisfies a polynomial $T$ that splits over $\mathbb K$. Let $\lambda_1, \dots, \lambda_k$ be distinct roots of $T$ with multiplities $m_1, \dots, m_k$. Then for any $Q\in\mathbb{K}[x]$, we have $$Q(A)=R(A),$$ where $R$ is the Lagrange interpolation polynomial of $Q$ with interpolation points $\lambda_1, \dots, \lambda_k$ and multiplicities  $m_1, \dots, m_k$. (Note that $R$ is also the remainder of $Q$ by $T$.)
\end{theorem}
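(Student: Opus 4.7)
The plan is to invoke the preceding proposition to rewrite $Q$ as a polynomial combination of $T$ and $R$, and then evaluate everything at $A$, using the hypothesis that $T$ annihilates $A$ to kill the $T$-term.

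More precisely, I would first handle the degenerate case: if $\deg Q < n$, then $R = Q$ by the uniqueness part of Theorem \ref{main} (since $Q$ itself satisfies the interpolation conditions for $Q$), and the statement reads $Q(A) = Q(A)$. So assume $\deg Q \ge n$. By the previous proposition applied to $Q$ and $T$, the Lagrange interpolation polynomial $R$ of $Q$ at the roots of $T$ with their multiplicities is precisely the remainder of $Q$ upon division by $T$. That is, there exists $S \in \mathbb{K}[x]$ with
\[ Q(x) = T(x) S(x) + R(x). \]

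Now I would evaluate this polynomial identity at the operator $A$. Because polynomial evaluation at a single operator is an algebra homomorphism $\mathbb{K}[x] \to \mathrm{End}(V)$, we obtain
\[ Q(A) = T(A) S(A) + R(A). \]
By hypothesis $A$ satisfies $T$, meaning $T(A) = 0$, so the first summand vanishes and we are left with $Q(A) = R(A)$, as desired.

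There is really no serious obstacle here; the proof is a one-line consequence of the previous proposition combined with the fact that evaluation at $A$ is multiplicative. The only point that requires a moment of care is the observation that the division-with-remainder formulation of the preceding proposition remains available regardless of whether $\deg Q$ exceeds $n$ or not, which is why I would split off the trivial case $\deg Q < n$ at the beginning. The parenthetical remark in the statement (``$R$ is also the remainder of $Q$ by $T$'') is exactly the bridge supplied by the proposition, so nothing beyond it is needed.
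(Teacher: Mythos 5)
Your proof is correct and follows essentially the same route as the paper: invoke the preceding proposition to identify $R$ with the remainder of $Q$ upon division by $T$, then evaluate at $A$ and use $T(A)=0$. Your explicit handling of the case $\deg Q < n$ is a small extra care the paper omits, but it does not change the argument.
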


\begin{proof}
	By the previous proposition, we know that the Lagrange interpolation polynomial $R$ for $Q$ with interpolation points $\lambda_1, \ldots, \lambda_k$ and multiplicities $m_1, \ldots, m_k$ is the remainder of the division of $Q$ by $T$. In particular
	\[ Q - R \equiv 0 \mod (T). \]
	We have $T(A) = 0$, and therefore $(Q - R)(A) = 0$. Or, in other words
	\[ Q(A) = R(A). \]
\end{proof}

\begin{cor}
	Let $A$ be an $n\times n$ matrix with the eigenvalues $\lambda_1, \ldots, \lambda_k$ with multiplicities $m_1, \ldots, m_k$, such that $\sum\limits_{j=1}^n m_j = n$.
	
	Then for any polynomial $Q$ with degree at least $n$, we have
	\[ Q(A) = R(A) \]
	where $R$ is the interpolation polynomial of $Q$ with respect to the interpolation points $\lambda_1, \ldots, \lambda_k$ and multiplicities $m_1, \ldots, m_k$.
\end{cor}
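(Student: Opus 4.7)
The plan is to obtain this corollary as a direct specialization of Theorem \ref{1}, using the characteristic polynomial of $A$ as the polynomial $T$ that $A$ satisfies. First I would set $T(x) = \prod_{j=1}^{k}(x-\lambda_j)^{m_j}$, which is (up to sign) the characteristic polynomial of $A$ and therefore has degree $\sum_j m_j = n$, splits over the ground field, and has exactly the eigenvalues $\lambda_1,\dots,\lambda_k$ as its distinct roots with the prescribed multiplicities.

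The next step is to invoke the Cayley--Hamilton theorem to conclude $T(A)=0$, so that $A$ is a linear operator satisfying a polynomial that splits over the field, which is exactly the hypothesis of Theorem \ref{1}. Applying that theorem to the polynomial $Q$ yields $Q(A) = R(A)$, where $R$ is the Lagrange interpolation polynomial of $Q$ with interpolation points $\lambda_1,\dots,\lambda_k$ and multiplicities $m_1,\dots,m_k$, which is precisely the conclusion.

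There is essentially no obstacle here: the work has already been done in Theorem \ref{1} and the underlying proposition identifying the interpolation polynomial with the remainder modulo $T$. The only thing to remark on is that the hypothesis ``degree of $Q$ at least $n$'' is needed to guarantee that the remainder $R$ coincides with the interpolation polynomial in the sense of the proposition (rather than being $Q$ itself); if $Q$ already has degree less than $n$, the statement is trivial since then $R = Q$. So the write-up is a single short paragraph citing Cayley--Hamilton and Theorem \ref{1}.
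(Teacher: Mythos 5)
Your proposal is correct and is exactly the intended argument: the paper presents this as an immediate corollary of Theorem \ref{1}, obtained by taking $T$ to be the characteristic polynomial of $A$ and invoking the Cayley--Hamilton theorem to get $T(A)=0$. Your additional remark about why the hypothesis $\deg Q \geq n$ matters is a sensible clarification but does not change the route.
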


\textbf{Example 2.1}. Let $A$ be $n\times n$ matrix with $n$ distinct eigenvalues $\lambda_1, \dots, \lambda_n$. Let $Q\in\mathbb{K}[x]$. Since the unique Lagrange interpolation polynomial $L$ of degree less than equal to $n$ with interpolation points $\lambda_1, \dots, \lambda_n$ and interpolation data $Q(\lambda_1), \dots, Q(\lambda_n)$ is $$L(x)=\sum\limits_{j=1}^n Q(\lambda_j) \prod\limits_{i\neq j} \dfrac{(x-\lambda_i)}{(\lambda_j-\lambda_i)} .$$ Hence we have, $Q(A)=L(A) = \sum\limits_{j=1}^n Q(\lambda_j) \prod\limits_{i\neq j} \dfrac{(A-\lambda_i)}{(\lambda_j-\lambda_i)} .$

We now restrict ourselves to the case $\mathbb{K} = \mathbb{C}$. And we get following results for the functions of operators.

\begin{theorem}\label{4} Let $A$ be an operator on a vector space $V$ over $\mathbb C$ such that $A$ satisfies a polynomial $T\in\mathbb C[x]$. Let $\lambda_1, \dots, \lambda_k$ be distinct roots of $T$ with multiplicities $m_1, \dots, m_k$. The following are true.
\begin{enumerate}
\item Assume $0\neq\lambda_j$ for all $1\leq j\leq k$. Then $A$ is invertible and we have $$A^{-1}=L(A),$$ where $L$ is the Lagrange interpolation polynomial of  the function $$1/x$$ with interpolation points $\lambda_1, \dots, \lambda_k$ and multiplicities $m_1, \dots, m_k$.
\item Consider a rational function $f(x) = P(x)/Q(x)$ such that $Q$ does not vanish at $\lambda_j$ for all $1\leq j\leq k$. Let $L$ be an interpolation polynomial of $f$ with the interpolation  points $\lambda_1, \dots, \lambda_k$ and multiplicities $m_1, \dots, m_k$. Then the operator $P(A)[Q(A)]^{-1}$ is defined and it is equal to $L(A)$.
\item For an entire function $F(x)$ of complex variable $x$, we have $$F(A)=L(A),$$ where $L$ is the Lagrange interpolation polynomial of  the function $F$ with interpolation points $\lambda_1, \dots, \lambda_k$ and multiplicities $m_1, \dots, m_k$.
\end{enumerate}
\end{theorem}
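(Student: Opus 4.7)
All three parts follow a common template: produce a polynomial identity $T(x) \mid H(x)$ that encodes the desired conclusion, then substitute $A$ using $T(A) = 0$. In each case the divisibility will come directly from the interpolation conditions defining $L$.

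For part (1), I would let $L$ be the Lagrange interpolation polynomial of $1/x$, which is well-defined because each $\lambda_j \neq 0$, and examine the polynomial $H(x) := xL(x) - 1$. Near $\lambda_j$ the meromorphic function $L(x) - 1/x$ is in fact holomorphic (since $\lambda_j \neq 0$), and by the interpolation conditions its first $m_j$ Taylor coefficients at $\lambda_j$ vanish, so it has a zero of order at least $m_j$ there. Multiplying by $x$, a unit at $\lambda_j$, preserves that order, so $(x-\lambda_j)^{m_j}$ divides $xL(x) - 1$ for every $j$, hence $T(x) \mid xL(x) - 1$. Substituting $A$ gives $AL(A) - I = 0$.

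For part (2), I would first verify that $Q(A)$ is invertible. Since $Q$ shares no root with $T$, the two polynomials are coprime in $\bC[x]$, so by Bezout there exist $U, V \in \bC[x]$ with $UQ + VT = 1$; evaluating at $A$ yields $U(A)Q(A) = I$. For the main claim I would apply the same divisibility trick with $H(x) := L(x)Q(x) - P(x)$. The function $L - P/Q$ is holomorphic at $\lambda_j$ because $Q(\lambda_j) \neq 0$, and by the interpolation conditions it has a zero there of order at least $m_j$; multiplying by $Q$, again a unit at $\lambda_j$, preserves that order. Therefore $T \mid LQ - P$, so $L(A)Q(A) = P(A)$ and $L(A) = P(A) Q(A)^{-1}$.

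For part (3), I would pass to the limit from the polynomial case. Let $Q_N$ denote the $N$th Taylor polynomial of $F$ at $0$. By Theorem \ref{1}, $Q_N(A) = R_N(A)$, where $R_N$ is the Lagrange interpolation polynomial of $Q_N$ with the prescribed data. As $N \to \infty$, for each $j$ and each $0 \le m < m_j$ the scalar $Q_N^{(m)}(\lambda_j)$ converges to $F^{(m)}(\lambda_j)$, since $F$ and each of its derivatives are represented on all of $\bC$ by the termwise-differentiated Taylor series. By Theorem \ref{main} the map sending interpolation data to interpolating polynomials of degree less than $n$ is a linear isomorphism, so coefficient-wise $R_N \to L$, and thus $R_N(A) \to L(A)$. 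Meanwhile $Q_N(A) \to F(A)$ by definition of $F(A)$; equating the two limits gives $F(A) = L(A)$. The only genuinely subtle step is this convergence, but it dissolves once one observes that $T(A) = 0$ forces every polynomial evaluation at $A$ to lie in the finite-dimensional subspace spanned by $I, A, \dots, A^{n-1}$, so convergence reduces to convergence of finitely many scalar coordinates and no completeness hypothesis on the ambient operator algebra is needed.
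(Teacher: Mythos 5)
Your parts (1) and (2) follow the paper's proof essentially verbatim: the divisibility $T \mid xL - 1$ obtained from the order of vanishing of $L - 1/x$ at each $\lambda_j$, and the Bezout argument $UQ + VT = 1$ for the invertibility of $Q(A)$ followed by $T \mid LQ - P$. Part (3), however, is where you genuinely diverge. The paper factors $F(x) - L(x) = T(x)G(x)$ with $G$ entire and then ``evaluates at $A$'' to conclude $F(A) = L(A)$; this is short but leans on an unstated fact, namely that substitution of $A$ into a product of entire functions is multiplicative (equivalently, that the power-series functional calculus at $A$ respects the factorization), and it does not really address what $F(A)$ means on a possibly infinite-dimensional $V$. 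Your route instead approximates $F$ by its Taylor polynomials $Q_N$ at $0$, applies Theorem \ref{1} to each $Q_N$, and passes to the limit using the fact that the data-to-polynomial map of Theorem \ref{main} is a linear isomorphism of finite-dimensional spaces (hence continuous both ways) and that all the operators $Q_N(A) = R_N(A)$ live in the span of $I, A, \dots, A^{n-1}$. This buys you two things the paper's version glosses over: it shows that the limit defining $F(A)$ actually exists without any completeness hypothesis, and it replaces the multiplicativity of the entire functional calculus by the purely algebraic Theorem \ref{1} plus finite-dimensional continuity. The price is a slightly longer argument. Both proofs are correct; yours is the more self-contained one for part (3).
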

\begin{proof}
	\begin{enumerate}[label=(\roman*)]
	\item We know that $\lambda_1, \ldots, \lambda_k$ are roots of the rational function
	\[ \frac{1}{x} - L(x) \]
	with multiplicities $m_1, \ldots, m_k$. Multiplying by $x$, we retain those roots and so we have
	\[ 1 - xL(x) \equiv 0 \mod (T). \]
	And since $T(A) = 0$ this implies that
	\[ A L(A) = I. \]
	\item Since $Q$ does not vanish on $\Lambda$ we know that $(Q,T) = 1$, and thus we can find $V, U\in \mathbb C(x)$ such that
	\[Q(x) V(x) + U(x) T(x) = 1. \]
	Evaluating at $A$ we get
	\[Q(A) V(A) = I. \]
	So $Q(A)$ is invertible, with $Q^{-1}(A) := V(A)$. Then, proceeding similarly to the case above, we have that
	\[ P(x)  - L(x) Q(x) \equiv 0 \mod (T). \]
	And therefore
	\[ P(A) = L(A) Q(A). \]
	Finally, we multiply on the right by $Q(A)^{-1}$ to get
	\[ L(A) = P(A) Q(A)^{-1}. \]
	
	\item We know that for $1 \leq j \leq k$ and $0 \leq m \leq m_j-1$ we have
	\[F^{(m)} (\lambda_j) = L^{(m)} (\lambda_j) \]
	Therefore, we can write
	\[ F(x) - L(x) = T(x) G(x) \]
	for some entire function $G(x)$. Evaluating at $A$ we get
	\[F(A) - L(A) = 0. \]
%Let $\sum\limits_{i=0}^{\infty}a_ix^i$ be the power series expansion of $F(x)$. Let $S_n(x)=\sum\limits_{i=0}^n a_ix^i$ for all $n$. Using Theorem \ref{1}, we have $S_n(A) = L_n(A)$ where $L_n(x)$ is the Lagrange interpolation polynomial of  the function $S_n(x)$ with interpolation points $\lambda_1, \dots, \lambda_k$ and multiplicities $m_1, \dots, m_k$ i.e. $$L_n^{(m)}(\lambda_j)=S_n^{(m)} (\lambda_j)\text{ for all }1\leq j\leq k\text{ and }1\leq m<m_j.$$ Since  $S_n^{(k)}(x)\rightarrow F^{(m)}(x)$ for all $x\in\bC$ and for all $k\geq 0$, we have $L_n^{(m)}(\lambda_j)=S_n^{(m)} (\lambda_j)\rightarrow F^{(m)}(\lambda_j)$ for all $1\leq j\leq k$ and $1\leq m<m_j.$

%Since the Lagrange polynomials $L_n(x)$ are polynomials of degree upto $n$ and $L_n(A)=S_n(A)$ is a convergent sequence, we have $L_n(A)$ converges to $L(A)$ where $L$ is a polynomial of degree upto $n$. And $L_n^{(m)}(\lambda_j)\rightarrow F^{(m)}(\lambda_j)$ for all $1\leq j\leq k$ and $1\leq m<m_j$, imply that $L(x)$ is the Lagrange interpolation polynomial of  the function $F(x)$ with interpolation points $\lambda_1, \dots, \lambda_k$ and multiplicities $m_1, \dots, m_k$.
\end{enumerate}
\end{proof}

\textbf{Example 2.2}. Let $A$ be $n\times n$ matrix with $n$ distinct eigenvalues $\lambda_1, \dots, \lambda_n$. From Example 2.1 and Theorem \ref{4}, we have
\begin{equation} \label{2} \text{exp}(A) = \sum\limits_{j=1}^n \text{exp} (\lambda_j) \prod\limits_{i\neq j} \dfrac{(A-\lambda_i)}{(\lambda_j-\lambda_i)}. \end{equation}
In fact, for any entire complex valued function $f$, we have $$f(A) = \sum\limits_{j=1}^n f(\lambda_j) \prod\limits_{i\neq j} \dfrac{(A-\lambda_i)}{(\lambda_j-\lambda_i)} .$$

Below we show application of our methods to compute inverse of a $3\times 3$ matrix and  in solving an order 3 homogeneous linear differential equation.

%%%%%%%%%%%%%%%%%%%%%%%%%
\subsection*{Computing the Inverse of a matrix}

As seen in Theorem \ref{4}, we are able to compute the inverse of a non-singular matrix $A$ using a Lagrange interpolation polynomial. Thus, we are able to solve consistent systems of linear equations. Let
$$ A = \begin{pmatrix}
	9 & -15 & -25 \\
	1 & 0 & 0 \\
	0 & 1 & 0
\end{pmatrix}$$
We compute the characteristic polynomial for $A$: $T(x) = - (x+1)(x-5)^2$. Thus, we want to find the Lagrange interpolation polynomial $L(x)$ for $f(x) = 1/x$, interpolation points $\lambda_1 = 5$, $\lambda_2 = -1$ with multiplicities $m_1 = 2$ and $m_2 = 1$.

There is a useful trick for computing the interpolation polynomial with \textit{one more} multiplicity than one we already know (which we do for the case $m_1 = m_2 = 1$). Let $$L_0 (x) = \frac{x-5}{6} + \frac{x+1}{30}$$
denote the interpolation polynomial with simple multiplicities. Then, we look for $L(x) = L_0(x) + c (x-5)(x+1)$ where $c \in \mathbb{C}$ is a constant.

In our case, we evaluate the derivative at 5 and require it equal to $f'(5) = \frac{-1}{25}$: $$L'(5) = \frac{1}{6} + \frac{1}{30} + 6c$$
from which we find $c = \frac{-1}{25}$. And so $L(x) = \frac{x-5}{6} + \frac{x+1}{30} - \frac{1}{25}(x-5)(x+1)$. The relevant matrices are
\begin{center}
$\begin{array}{cc}
	A + I =
		\begin{pmatrix}
		10 & - 15 & -25 \\
		1 & 1 & 0 \\
		0 & 1 & 1
		\end{pmatrix} &
		A - 5I =
		\begin{pmatrix}
			4 & -15 & -25 \\
			1 & -5 & 0 \\
			0 & 1 & -5
		\end{pmatrix}
\end{array}$
\end{center}

$$(A+I)(A-5I) =
\begin{pmatrix}
	25 & -100 & -125 \\
	5 & -20 & -25 \\
	1 & -4 & -5
\end{pmatrix}$$

And we find that $$A^{-1} = L(A) = \begin{pmatrix}
	0 & 1 & 0 \\
	0 & 0 & 1 \\
	-\frac{1}{25} & \frac{9}{25} & -\frac{3}{5}
\end{pmatrix}$$

%%%%%%%%%%%%%%%%%%%%%%%%%
\subsection*{An order 3 homogeneous linear differential equation}

One standard application of the matrix exponential is to ordinary linear differential equations. In particular, a general solution to the matrix differential equation
$$ \mathbf{y}' = A \mathbf{y} $$
is given by $\mathbf{y}(t) = \exp(t A) \mathbf{c}$, where $\mathbf{c}$ is an arbitrary column of constants.

The standard solution to this problem is to fix a basis of the vector space so that the matrix $A$ is in Jordan normal form, and to learn how to take the exponentials of Jordan blocks. See for example Chapters 5 and 6 of \cite{DE}. Similar to the above computation of the inverse matrix, Theorem \ref{4} allows us to compute the matrix exponential using Lagrange interpolation polynomials, without considering any special bases, and without needing any normal forms.

Consider for example the equation $$y''' - 9 y'' + 15 y' + 25y = 0.$$ Written as a system of differential equations, this is
\[ \mathbf{y}' = \begin{pmatrix}
	9 & -15 & -25 \\
	1 & 0 & 0 \\
	0 & 1 & 0
\end{pmatrix} \mathbf{y} \]
where $\mathbf{y} = \begin{pmatrix}
	y'' \\
	y' \\
	y
\end{pmatrix}$, and we see the same matrix $A$ as in the previous example. We are interested in computing $\exp (tA)$. Similar to before, the characteristic polynomial is $T(x) = - (x+t)(x-5t)^2$, so we want to compute the interpolation polynomial for the function $f(x) = \exp(x)$ with interpolation points $\lambda_1 = 5t$ and $\lambda_2 = -t$ with multiplicities 2 and 1 respectively. Again, we write down the interpolation polynomial with simple multiplicities $$L_0 (x) = -\frac{e^{-t} (x - 5t)}{6t} + \frac{e^{5t} (x+t)}{6t}$$ and we look for $L(x) = L_0(x) + c (x-5t)(x+t)$, noting that $c$ may depend on $t$ (but not $x$).

We require $$e^{5t} = f'(5t) = L'(5t) = -\frac{e^{-t}}{6t} + \frac{e^{5t}}{6t} + 6ct, $$ from which we find $ct^2 = \frac{e^{5t}(6t-1) + e^{-t}}{36}$. And therefore
\[ L(x) = -\frac{e^{-t} (x - 5t)}{6t} + \frac{e^{5t} (x+t)}{6t} + \frac{e^{5t}(6t-1) + e^{-t}}{36t^2} (x+t)(x-5t). \]

The relevant matrices are
$$\begin{array}{cc}
	tA - 5tI =
	\begin{pmatrix}
		4t & - 15t & -25t \\
		t & -5t & 0 \\
		0 & t & -5t
	\end{pmatrix} &
	tA + tI =
	\begin{pmatrix}
		10t & -15 t& -25t \\
		t & t & 0 \\
		0 & t & t
	\end{pmatrix}
\end{array}$$
$$(tA+tI)(tA-5tI) =
\begin{pmatrix}
	25t^2 & -100t^2 & -125t^2 \\
	5t^2 & -20t^2 & -25t^2 \\
	1t^2 & -4t^2 & -5t^2
\end{pmatrix}$$

Evaluating $L(tA) = \exp(tA)$, we find that the general solution is given by
\begin{equation*}
	\begin{split}
		y(t) = C_1 &\left( -\frac{2e^{-t}}{3} + 2 e^{5t} + \frac{25}{36} \left( e^{5t} (6t-1) + e^{-t} \right) \right) \\
		&+ C_2 \left( -\frac{5e^{-t}}{2} - 3 e^{5t} - \frac{25}{9} \left( e^{5t} (6t-1) + e^{-t} \right) \right) \\
		&+ C_3 \left( -\frac{25e^{-t}}{6} - 5 e^{5t} - \frac{125}{36} \left( e^{5t} (6t-1) + e^{-t} \right) \right)
	\end{split}
\end{equation*}
where $C_1, C_2, C_3 \in \mathbb{C}$ are arbitrary constants.

The above examples show that the general procedure to compute the Lagrange interpolation polynomials is as follows. If we know the interpolation polynomial $L_0$ for $k$ points $\lambda_1, \ldots, \lambda_k$  with multiplicities $m_1, \ldots, m_k$. Then to find interpolation polynomial $L$ for the same data of interpolation polynomial with one extra multiplicity or one extra point with multiplicity $1$ can be computed by taking $$L(x)=L_0(x)+c(x - \lambda_1)^{m_1} \cdots (x - \lambda_k)^{m_k}$$ and using the interopolation data to find $c$.

The following result shows the method of computing the interpolation polynomial on the union of two sets of interpolation points:

\begin{theorem}
	Let $T_1(x) = (x - \lambda_1)^{m_1} \ldots (x - \lambda_p )^{m_p}$ and $T_2 (x) = (x - \beta_1)^{\ell_1} \ldots (x - \beta_q)^{\ell_q}$ with $\lambda_i\neq \beta_j$ for all $1\leq i\leq p$ and $1\leq \beta_j\leq q$.
	Then the Lagrange interpolation polynomial of $Q \in\mathbb K[x]$ with respect to $\Lambda = \Lambda_1 \cup \Lambda_2$ is equal to
	\[ L = [Q T_2^{-1}]_1 T_2 + [Q T_1^{-1}]_2 T_1 \]
	where $T_1^{-1}, T_2^{-1}$ denotes the inverse of $T_1$ and $T_2$ in the ideal generated by $T_2$ and $T_1$ respectively (which exists because $T_1$ and $T_2$ has no common roots). And $[QT_2^{-1}]_1$ and $[QT_1^{-1}]_2$ denote the interpolation polynomials of $QT_2^{-1}$ and $QT_1^{-1}$, as rational functions, with respect to $\Lambda_1$ and $\Lambda_2$ respectively.
\end{theorem}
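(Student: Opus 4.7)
The plan is to verify directly that the proposed expression for $L$ satisfies the two defining properties of the Lagrange interpolation polynomial of $Q$ over $\Lambda$, namely the degree bound and the derivative-matching conditions, and then invoke the uniqueness part of Theorem~\ref{main}. Setting $n = m_1 + \cdots + m_p + \ell_1 + \cdots + \ell_q$, the degree bound is essentially free: $[QT_2^{-1}]_1$ has degree strictly less than $\deg T_1$ by definition, so $[QT_2^{-1}]_1 T_2$ has degree strictly less than $\deg T_1 + \deg T_2 = n$, and the other summand is bounded symmetrically.

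For the derivative conditions, I would fix $\lambda_i \in \Lambda_1$ and check that $L^{(m)}(\lambda_i) = Q^{(m)}(\lambda_i)$ for $0 \leq m < m_i$. The summand $[QT_1^{-1}]_2 T_1$ contributes nothing at such $\lambda_i$, because $T_1$ has a zero of order $m_i$ there and hence (by the Leibniz rule) all derivatives of the product through order $m_i - 1$ vanish. So the entire burden reduces to showing that $[QT_2^{-1}]_1 T_2$ agrees with $Q$ through order $m_i - 1$ at $\lambda_i$; the symmetric argument will then handle points of $\Lambda_2$.

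The main conceptual step, and what I expect to be the only delicate point, is converting the purely algebraic definition of $T_2^{-1}$ (as a polynomial representative of the inverse of $T_2$ in $\mathbb{K}[x]/(T_1)$) into analytic information about derivatives. By construction $T_2 \cdot T_2^{-1} - 1$ is divisible by $T_1$, which vanishes to order $m_i$ at $\lambda_i$, so the polynomial $T_2 T_2^{-1}$ matches the constant $1$ in all derivatives through order $m_i - 1$ at $\lambda_i$. A side benefit of this same observation is that the expression $L$ does not depend on the choice of representative for $T_2^{-1}$, since any two choices differ by a multiple of $T_1$ whose interpolation data at $\Lambda_1$ is zero.

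Combining these pieces: by definition of the interpolation polynomial, $[QT_2^{-1}]_1$ and the polynomial $QT_2^{-1}$ agree through order $m_i - 1$ at $\lambda_i$, so multiplying by $T_2$ and applying the Leibniz rule yields that $[QT_2^{-1}]_1 \cdot T_2$ agrees, through order $m_i - 1$ at $\lambda_i$, with $Q T_2^{-1} \cdot T_2 = Q \cdot (T_2 T_2^{-1})$, which in turn agrees with $Q \cdot 1 = Q$ by the previous paragraph. This closes the interpolation check at $\Lambda_1$; the argument at $\Lambda_2$ is identical with the roles of $T_1$ and $T_2$ swapped. Uniqueness then forces the expression on the right-hand side to equal the Lagrange interpolation polynomial of $Q$ over $\Lambda$.
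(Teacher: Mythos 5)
Your proof is correct and follows essentially the same route as the paper's: verify the degree bound, check the interpolation conditions at each point of $\Lambda_1$ (with the cross term $[QT_1^{-1}]_2\,T_1$ contributing nothing through order $m_i-1$ because $T_1$ vanishes to order $m_i$ there), argue symmetrically at $\Lambda_2$, and invoke uniqueness. The only difference is one of bookkeeping: you work with the polynomial inverse of $T_2$ modulo $T_1$ and correctly supply the bridging observation that $T_2 T_2^{-1}\equiv 1 \pmod{T_1}$ forces derivative agreement (plus the nice remark that $L$ is independent of the choice of representative), whereas the paper interpolates the rational function $Q/T_2$ directly.
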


\begin{proof} Let $n_1=\sum\limits_{j=1}^p m_j$ and $n_2=\sum\limits_{j=1}^q l_j$.  First observe that as defined, $L$ is a polynomial of degree smaller than $n_1 + n_2$. This is easily seen since $[\cdot]_1$ and $[\cdot]_2$ have degrees smaller than $n_1$ and $n_2$ respectively.
	
	Let us examine what happens at $\Lambda_1$. We know that
	\[ [QT_2^{-1}]_1 - \frac{Q}{T_2} \]
	has roots of multiplicities $m_1, \ldots, m_q$ at $\lambda_1, \ldots, \lambda_q \in \Lambda_1$. In particular, write
	\[ [QT_2^{-1}]_1 (x) = \frac{Q(x)}{T_2(x)} + (x - \lambda_1)^{m_1} \ldots (x - \lambda_q)^{m_q} G(x) \]
	for some rational function $G$ which has no poles on $\Lambda_1$. Then we have
	\[ L(x) = Q(x) + (x - \lambda_1)^{m_1} \ldots (x - \lambda_q)^{m_q} G(x) T_2(x) + [QT_1^{-1}]_2 (x) T_1 (x) \]
	Since $T_1$ has roots on $\Lambda$ of the corresponding multiplicities, we see right away that for $1 \leq j \leq q$:
	\[ L^{(m)} (\lambda_j) = Q^{(m)} (\lambda_j). \]
	for all $0 \leq m \leq m_j - 1$. Symmetric arguments will give that for $1 \leq j \leq q$:
	\[L^{(\ell)} (\beta_j) = Q^{(\ell)} (\beta_j) \]
	for all $0 \leq \ell \leq \ell_j - 1$.
	
	And so by uniqueness of the Lagrange interpolation polynomial we are done.
\end{proof}

%%%%%%%%%%%%%%%%%%%%%%%%%
\section{Principle Lagrange resolvent and its applications}\label{section2}

In this section, we define principle Lagrange resolvents and we show that it is enough to compute the principle Lagrange interpolation polynomials to compute functions of matrices over $\mathbb C$.

As mentioned before that the standard method to solve the homogenous linear ordinary differential equations is in finding the Jordan normal form of matrices and  learn how to take the exponentials of Jordan blocks. We have already seen the application of Lagrange interpolation in solving homogeneous linear ordinary differential equations without computing Jordan canonical form. As an application of principle Lagrange interpolation polynomials, we  provide a proof that all matrices can be put into Jordan normal form.

\begin{definition} In the special case where $c_i^{(0)}=1$ and all other interpolation data are $0$, we define the corresponding Lagrange interpolation polynomial to be the  \emph{principal Lagrange resolvent} with respect to $\lambda_i$.
\end{definition}
We will denote the principal Lagrange resolvent with respect to $\lambda_i$, by $\hat{T}_i$.

\begin{prop} \label{properties}
	For $\{\hat{T}_i \}_{i=1}^k$ principal Lagrange resolvents on $k$ interpolation points with multiplicities $m_1, \ldots , m_k$ summing to $n$, we have
	\begin{enumerate}[label=(\roman*)]
		\item $\hat{T}_1 +  \ldots + \hat{T}_k - 1 = 0 $,
		\item $\hat{T}_i \hat{T}_j \equiv 0 \mod (T) \text{ if } i \neq j$,
		\item $(\hat{T}_i )^2 \equiv \hat{T}_i \mod (T)$,
		\item $(t - \lambda_i)^{m_i} \hat{T}_i \equiv 0 \mod (T)$ for all $1\leq i\leq k$.
	\end{enumerate}
\end{prop}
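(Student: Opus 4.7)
The four properties can be proved in the order (iv) $\to$ (ii) $\to$ (i) $\to$ (iii), with the first two being essentially divisibility observations, the third being an application of uniqueness from Theorem \ref{main}, and the last a one-line consequence of the others. The main obstacle is really just to notice the right ordering; no single step requires delicate analysis.

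My starting point would be (iv). By definition, for $j \neq i$ the polynomial $\hat T_i$ satisfies $\hat T_i^{(m)}(\lambda_j) = 0$ for all $0 \le m < m_j$, which exactly says that $(x - \lambda_j)^{m_j}$ divides $\hat T_i$. Taking the product over all $j \neq i$, we find that $\prod_{j \neq i}(x-\lambda_j)^{m_j}$ divides $\hat T_i$. Multiplying by $(x-\lambda_i)^{m_i}$ then gives that $T(x)$ divides $(x-\lambda_i)^{m_i}\hat T_i$, which is (iv).

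For (ii), I would apply the same divisibility observation to both factors. The polynomial $\hat T_i$ is divisible by $(x - \lambda_\ell)^{m_\ell}$ for every $\ell \neq i$, and $\hat T_j$ is divisible by $(x-\lambda_\ell)^{m_\ell}$ for every $\ell \neq j$. When $i \neq j$, the product $\hat T_i \hat T_j$ therefore picks up a factor $(x-\lambda_\ell)^{m_\ell}$ for every index $\ell$ (from $\hat T_j$ if $\ell = i$, from $\hat T_i$ if $\ell = j$, and from both otherwise), so $T$ divides $\hat T_i \hat T_j$.

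For (i), consider $S(x) := \hat T_1(x) + \cdots + \hat T_k(x)$, a polynomial of degree less than $n$. By the defining interpolation data, $S^{(m)}(\lambda_j) = \hat T_j^{(m)}(\lambda_j)$ for each $j$ and each $0 \le m < m_j$; this equals $1$ when $m = 0$ and equals $0$ otherwise. Thus $S$ and the constant polynomial $1$ share the same interpolation data at the prescribed points to the prescribed multiplicities, so by the uniqueness clause of Theorem \ref{main} we have $S \equiv 1$, giving (i). Finally, for (iii), multiply the identity $\sum_\ell \hat T_\ell = 1$ by $\hat T_i$ and reduce modulo $T$; every cross term $\hat T_\ell \hat T_i$ with $\ell \neq i$ vanishes by (ii), leaving $\hat T_i^2 \equiv \hat T_i \pmod T$.
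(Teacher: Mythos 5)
Your proof is correct. Parts (ii) and (iv) are essentially the paper's argument: both rest on the observation that the vanishing interpolation data at $\lambda_j$ for $j \neq i$ forces $(x-\lambda_j)^{m_j}$ to divide $\hat{T}_i$, and then one counts root multiplicities against $T$. Your part (i) is also the same in substance --- the paper phrases it as ``$\hat{T}_1+\dots+\hat{T}_k-1$ is a polynomial of degree at most $n-1$ with $n$ roots counted with multiplicity, hence zero,'' while you phrase it as the uniqueness clause of Theorem \ref{main} applied to the constant polynomial $1$; these are the same argument, since uniqueness was proved by exactly that root count. The one genuine divergence is (iii): the paper proves it directly, writing $\hat{T}_i^2-\hat{T}_i=\hat{T}_i(\hat{T}_i-1)$ and checking via the Leibniz rule that each $\lambda_\ell$ is a root of multiplicity at least $m_\ell$, whereas you derive it algebraically by multiplying the identity $\sum_\ell \hat{T}_\ell=1$ by $\hat{T}_i$ and killing the cross terms with (ii). Your route is shorter and avoids the slightly delicate derivative computation at $\lambda_i$ (where the paper's assertion that $\hat{T}_i^{(m)}(\lambda_i)=0$ for \emph{all} $m\in\mathbb{N}$ is stated more strongly than needed or justified); the paper's route has the minor virtue of keeping the four statements logically independent. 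Either is acceptable, and your ordering (iv) $\to$ (ii) $\to$ (i) $\to$ (iii) is internally consistent.
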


\begin{proof}
	\begin{enumerate}[label=(\roman*)]
		\item Note that $\lambda_1, \ldots, \lambda_k$ are distinct roots with multiplicities $m_1, \ldots m_k$ of the degree at most $(n-1)$ polynomial
		\[ \hat{T}_1 + \ldots + \hat{T}_k - 1. \]
		Since $\sum\limits_{i=1}^k m_i = n$, this implies that the polynomial is identically 0.
		
		\item Let $1 \leq i < j \leq k$. Then, for every $\lambda_\ell$, $1 \leq \ell \leq k$ we have either $\hat{T}_i(\lambda_\ell) = 0$ or $\hat{T}_j (\lambda_\ell) = 0$ with multiplicity $m_\ell$. And so every root of $T$ is a root of $\hat{T}_i \hat{T}_j$ with at least equal multiplicity. And therefore $\hat{T}_i \hat{T}_j \equiv 0 \mod (T)$.
		
		\item Consider the polynomial $(\hat{T}_i)^2 - \hat{T}_i = \hat{T}_i ( \hat{T}_i - 1)$. For $\ell\neq i$, $\lambda_{\ell}$ is a root of $\widehat{T_i}$ with multiplicity atleast $m_{\ell}$, and therefore a root of multiplicity at least $m_{\ell}$ of $\hat{T}_i^2 - \hat{T}_i$. For $\lambda_{i}$, we have $\widehat{T_i}(\lambda_i)=1$ and $\widehat{T_i}^m(\lambda_i)=0$ for all $m\in\bN$. Now, $$(\widehat{T_i}^2-\widehat{T_i})^{m}(\lambda_i) = \sum\limits_{p=0}^m \widehat{T_i}^p(\lambda_i)(\widehat{T_i}-1)^{m-p}(\lambda_i) = 0.$$ So, $\lambda_i$ is also root of  $\widehat{T_i}^2-\widehat{T_i}$ with multiplicity at least $m_i$. Therefore, we have $\hat{T}_i^2 - \hat{T}_i \equiv 0 \mod (T)$.
		
		\item Similarly, note that $\lambda_j$, $j \neq i$ is a root of multiplicity at least $m_j$ for $\hat{T}_i$. And clearly, $\lambda_i$ is a root of $(t - \lambda_i)^{m_i}$ with multiplicity $m_i$. Therefore $(t - \lambda_i)^{m_i} \hat{T}_i \equiv 0 \mod (T)$.
	\end{enumerate}
\end{proof}

Consider now a linear operator $A$ over a vector space $V$ (possibly infinite dimensional) over $\mathbb K$. Suppose $A$ satisfies a polynomial $T\in \mathbb K[x]$ of degree $n$. Assume $T$ splits over $\mathbb K$, and has $k$ different roots $\lambda_1,\dots, \lambda_k$ with multiplicities $m_1, \dots, m_k$.

\begin{definition} The operator $L_i(A)=\widehat{T_i}(A)$ is called principle Lagrange resolvent of the operator $A$ corresponding to the polynomial $T$ and root $\lambda_i$, where $\widehat{T_i}(x)$ is the principle Lagrange resolvent corresponding to $\lambda_i$.
\end{definition}

\begin{theorem}\label{main2} The principle Lagrange resolvents $L_i$ of the operator $A$ corresponding to an annihilating polynomial $T$ satisify the following.
	\begin{enumerate}[label=(\roman*)]
		\item\label{one} $L_1(A)+\dots+L_k(A)= I$, where $I$ is identity matrix,
		\item $L_i(A)L_j(A)=0$ for $i\neq j$,\label{two}
		\item $L_i^2(A)=L_i(A)$,\label{three}
		\item $(A-\lambda_i I)^{m_i} L_i(A)=0$.\label{four}
	\end{enumerate}
\end{theorem}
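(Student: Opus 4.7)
The plan is to observe that each of the four identities in this theorem is the operator-level translation of a polynomial identity already established in Proposition \ref{properties}, and the translation is mediated by the hypothesis $T(A)=0$. In more detail, Proposition \ref{properties} gives, at the level of polynomials in $\mathbb{K}[x]$:
\[
\hat{T}_1+\cdots+\hat{T}_k-1=0,\quad \hat{T}_i\hat{T}_j\equiv 0,\quad \hat{T}_i^{\,2}-\hat{T}_i\equiv 0,\quad (x-\lambda_i)^{m_i}\hat{T}_i\equiv 0\pmod{T}.
\]
So the only thing I need is a general principle that says: if $P(x)\equiv 0\pmod{T(x)}$, then $P(A)=0$. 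This is immediate since $P=TS$ for some $S\in\mathbb{K}[x]$, whence $P(A)=T(A)S(A)=0\cdot S(A)=0$, using $T(A)=0$. (This is also precisely what powers the proof of Theorem \ref{1}.)

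With that principle in hand I would proceed item by item. For (i), the polynomial $\hat{T}_1+\cdots+\hat{T}_k-1$ is actually the zero polynomial in $\mathbb{K}[x]$, not merely zero modulo $T$, so substituting $A$ gives $L_1(A)+\cdots+L_k(A)-I=0$. For (ii) and (iii), Proposition \ref{properties} gives $\hat{T}_i\hat{T}_j\equiv 0$ and $\hat{T}_i^2\equiv \hat{T}_i\pmod{T}$; applying the principle above (writing $\hat{T}_i\hat{T}_j = T\cdot S_{ij}$ and $\hat{T}_i^2-\hat{T}_i = T\cdot S_i$ for suitable polynomials $S_{ij},S_i$, and evaluating at $A$) yields $L_i(A)L_j(A)=0$ and $L_i(A)^2=L_i(A)$. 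For (iv), the same recipe applied to $(x-\lambda_i)^{m_i}\hat{T}_i\equiv 0\pmod T$ gives $(A-\lambda_i I)^{m_i}L_i(A)=0$, noting that substituting $A$ into $(x-\lambda_i)^{m_i}$ yields $(A-\lambda_i I)^{m_i}$ because scalar multiples of the identity commute with everything.

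There is no real obstacle here: the content of the theorem lies entirely in Proposition \ref{properties}, and the step from polynomial congruences modulo $T$ to operator identities is a one-line consequence of $T(A)=0$. The only thing to be mildly careful about is to state explicitly, perhaps as a small lemma at the start of the proof, the implication ``$P\equiv 0\pmod T \Rightarrow P(A)=0$,'' so that each of the four verifications reduces to a single citation of Proposition \ref{properties} followed by substitution.
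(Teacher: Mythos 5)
Your proposal is correct and follows essentially the same route as the paper: both reduce each item to the corresponding polynomial congruence in Proposition \ref{properties} and then use $T(A)=0$ to convert congruence modulo $T$ into an operator identity. Your explicit remark that item (i) is an identity of polynomials (not merely a congruence) is a small but accurate refinement of the paper's one-line argument.
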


\begin{proof}
	Each of these expressions is obtained directly from evaluating the corresponding polynomials on the left hand sides of Proposition \ref{properties} at $A$. Each of these polynomials is divisible by $T$, and $T$ annihilates $A$, so we are able to replace the $\mod (T)$ congruence with equality.
\end{proof}

As a direct corollary of Theorem \ref{4} and Example 2.2, we get that for a matrix $A$ over $\mathbb C$ having only one eigenvalue, functions of matrices can be computed by principle Lagrange resolvents.

\begin{cor}\label{new} Let $A$ be an operator on a vector space $V$ over $\mathbb C$ such that $A$ satisfies the polynomial $(x-\lambda_1)^n$. Let $F(x)$ be an entire function or a rational function such that $\lambda_1$ is not a pole of $F(x)$, then $$F(A)=T_{\lambda_1}^{(n-1)}(A),$$ where $T_{\lambda_1}^{(n-1)}(x)$ is the degree $n-1$ Taylor polynomial of $F$ at point $\lambda_1$. Thus, $$F(A) = T_{\lambda_1}^{(n-1)}(A) = \sum\limits_{k=0}^{n-1}\dfrac{1}{k!} F^{(k)}(\lambda_1)(A-\lambda_1 I)^{k}.$$
\end{cor}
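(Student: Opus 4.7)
The plan is to recognize this as an immediate corollary of Theorem \ref{4} combined with Example 1.1. The hypothesis is that $A$ is annihilated by the single-root polynomial $T(x) = (x - \lambda_1)^n$, which corresponds to the interpolation data $k = 1$, $\lambda_1$ the unique interpolation point, and $m_1 = n$ its multiplicity. With this reduction, parts (ii) and (iii) of Theorem \ref{4} directly apply: for any entire $F$ (or rational $F$ whose denominator does not vanish at $\lambda_1$), we have $F(A) = L(A)$, where $L$ is the Lagrange interpolation polynomial of $F$ with the single interpolation point $\lambda_1$ of multiplicity $n$.

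Next, I would invoke Example 1.1, which already identifies the Lagrange interpolation polynomial with one interpolation point $\lambda_1$ of multiplicity $n$ as the degree $n-1$ Taylor polynomial of $F$ at $\lambda_1$:
\[ L(x) = \sum_{k=0}^{n-1} \frac{1}{k!} F^{(k)}(\lambda_1)(x - \lambda_1)^k = T_{\lambda_1}^{(n-1)}(x). \]
Combining the two gives $F(A) = T_{\lambda_1}^{(n-1)}(A)$, which is precisely the claimed formula after substituting $A$ for $x$.

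Essentially no obstacle arises here, since both facts used have already been proved. The only point to be careful about is formally invoking the rational-function case of Theorem \ref{4}(ii) when $F$ is rational: one must note that the hypothesis that $\lambda_1$ is not a pole of $F$ ensures that the denominator $Q$ of $F$ does not vanish at the unique interpolation point $\lambda_1$, which is exactly the assumption needed in Theorem \ref{4}(ii). The entire-function case follows from Theorem \ref{4}(iii) with no further checks. Thus the proof collapses to two one-line citations plus an explicit statement of the Taylor expansion.
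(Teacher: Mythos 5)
Your proposal is correct and is essentially the paper's own argument: the paper states Corollary \ref{new} as a direct consequence of Theorem \ref{4} together with the observation (Example 1.1) that the Lagrange interpolation polynomial with a single interpolation point $\lambda_1$ of multiplicity $n$ is the degree $n-1$ Taylor polynomial of $F$ at $\lambda_1$. Your added remark that ``$\lambda_1$ is not a pole'' is exactly the non-vanishing hypothesis needed for Theorem \ref{4}(ii) is a correct and worthwhile clarification, but it does not change the route.
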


Using Theorem \ref{main2}, we get the following more general result.

\begin{cor}\label{neww} Let $A$ be an operator on a vector space $V$ over $\mathbb C$ such that $A$ satisfies a polynomial $T(x)\in\mathbb C[x]$. Let $\lambda_1, \dots, \lambda_k$ be distinct roots of $T$ with multiplities $m_1, \dots, m_k$. Let $F(x)$ be an entire function or a rational function such that $\lambda_1, \dots, \lambda_k$ are not poles of $F(x)$, then $$F(A)=\sum\limits_{i=1}^kT_{\lambda_i}^{(m_i-1)}(A)L_i(A),$$ where $T_{\lambda_i}^{(m_i-1)}(x)$ is the degree $m_i-1$ Taylor polynomial of $F$ at point $\lambda_i$ and $L_i(A)$ is the principle Lagrange resolvent of the operator $A$ corresponding to the polynomial $T$ and root $\lambda_j$.
\end{cor}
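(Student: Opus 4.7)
The plan is to combine Theorem~\ref{4} with the operator identities of Theorem~\ref{main2}. By part (i) of Theorem~\ref{main2}, the principal Lagrange resolvents partition the identity, so
$$F(A) \;=\; F(A)\sum_{i=1}^k L_i(A) \;=\; \sum_{i=1}^k F(A)\,L_i(A),$$
and the corollary reduces to proving, for each fixed $i$, the single-block identity
$$F(A)\,L_i(A) \;=\; T_{\lambda_i}^{(m_i-1)}(A)\,L_i(A). \qquad(\star)$$

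To establish $(\star)$, I would first invoke Theorem~\ref{4} to replace $F(A)$ by $\widetilde{L}(A)$, where $\widetilde{L}$ is the Lagrange interpolation polynomial of $F$ with data $(\lambda_1, m_1), \dots, (\lambda_k, m_k)$. By construction $\widetilde{L}$ agrees with $F$ in value and in its first $m_i-1$ derivatives at $\lambda_i$, and so does the Taylor polynomial $T_{\lambda_i}^{(m_i-1)}$. Hence their difference vanishes to order at least $m_i$ at $\lambda_i$, and factors as
$$\widetilde{L}(x) - T_{\lambda_i}^{(m_i-1)}(x) \;=\; (x - \lambda_i)^{m_i}\, g_i(x)$$
for some polynomial $g_i \in \bC[x]$. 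Evaluating at $A$ and using that polynomials in $A$ commute,
$$\bigl(\widetilde{L}(A) - T_{\lambda_i}^{(m_i-1)}(A)\bigr)\,L_i(A) \;=\; g_i(A)\bigl[(A - \lambda_i I)^{m_i}\, L_i(A)\bigr] \;=\; 0,$$
where the last step is property (iv) of Theorem~\ref{main2}. This proves $(\star)$, and summing over $i$ yields the stated formula.

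No serious obstacle is expected; the argument is a clean assembly of already-proven tools. The only bookkeeping step is to confirm that Theorem~\ref{4} applies in both the entire and the rational setting, which is exactly its scope. The conceptual driver is the localization identity $(A - \lambda_i I)^{m_i} L_i(A) = 0$: it is precisely what allows the $i$-th summand to depend only on the first $m_i$ Taylor coefficients of $F$ at $\lambda_i$, giving the natural extension of Corollary~\ref{new} from a single eigenvalue to an arbitrary splitting polynomial.
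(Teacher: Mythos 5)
Your argument is correct. It takes a genuinely different (and arguably cleaner) route than the paper's primary proof: after the same opening decomposition $F(A)=\sum_i F(A)L_i(A)$ from Theorem \ref{main2}\labelcref{one}, the paper passes to the restriction $A_i$ of $A$ to the range of $L_i(A)$, notes that $(x-\lambda_i)^{m_i}$ annihilates $A_i$, and invokes the single-eigenvalue Corollary \ref{new}; you instead stay global, replacing $F(A)$ by the interpolation polynomial $\widetilde{L}(A)$ via Theorem \ref{4}, factoring $\widetilde{L}(x)-T_{\lambda_i}^{(m_i-1)}(x)=(x-\lambda_i)^{m_i}g_i(x)$, and killing the error term with the identity $(A-\lambda_i I)^{m_i}L_i(A)=0$ from Theorem \ref{main2}\labelcref{four}. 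Your version is essentially a rigorous rendering of the paper's sketched ``alternative proof,'' which only remarks that $\sum_i T_{\lambda_i}^{(m_i-1)}(x)L_i(x)$ carries the same interpolation data as the interpolation polynomial of $F$. What your approach buys is that it avoids the paper's somewhat delicate step $F(A)L_i(A)=F(A_i)L_i(A)$, which requires interpreting $F$ of a restricted operator and speaking of the ``characteristic polynomial'' of $A_i$ even though $V$ may be infinite-dimensional; every step you use is an already-established identity among polynomials in $A$. What the paper's subspace argument buys in exchange is a more conceptual picture of $L_i(A)$ as the projection onto the $\lambda_i$-generalized eigenspace, which it then reuses for the Jordan form discussion.
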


\begin{proof} Using Theorem \ref{main2}, we have $F(A)=F(A)L_1(A)+\dots+F(A)L_k(A)$. For every $1\leq i\leq k$, $F(A)L_i(A) = F(A_i) L_i(A)$, where $A_i$ is the restriction of  operator $A$ on the range $V_i$ of $L_i(A)$. Since $(A-\lambda_iI)^{m_i}(v)=0$ for all $v\in V_i$, the characteristic polynomial of $A_i$ is $(x-\lambda_i)^{m_i}$. Using Corollary \ref{new}, we get $F(A_i)=T_{\lambda_i}^{(m_i-1)}(A)$.
\end{proof}

\textit{Alternative proof:} The proof follows from Theorem \ref{4} and noting that the polynomial given by \begin{equation}\label{11}P(x)=\sum\limits_{i=1}^k\left(\sum\limits_{p=0}^{m_i-1} \dfrac{c_i^p}{p!} (x-\lambda_i)^p\right) L_i(x).\end{equation} has same interpolation data as that of the Lagrange interpolation polynomial $L$ of  the function $F$ with interpolation points $\lambda_1, \dots, \lambda_k$, multiplicities $m_1, \dots, m_k$ and interpolation data $$c_1^{(0)},\dots,c_1^{(m_1-1)}, \dots, c_k^{(0)},\dots,c_k^{(m_k-1)}$$ is

Now, we present a formula for the principle Lagrange interpolating polynomial with the given interpolation data.

\begin{prop} Let $\Lambda=\{\lambda_1,\dots, \lambda_k\}\subseteq \mathbb K$ be a set of $k$ distinct elements. For all $1\leq i\leq k$, let $m_i\in \bN$ be a natural number associated with $\lambda_i$ such that $\sum\limits_{i=1}^k m_i=n$. Then the principle Lagrange resolvent with respect to $\lambda_i$ is given by $$\widehat{T_i}=T_{\lambda_i}^{m_i-1}(x) \prod\limits_{j\neq i} (x-\lambda_j)^{m_j},$$ where $T_{\lambda_i}^{m_i-1}$ is the Taylor polynomial at $\lambda_i$ of degree $m_i-1$ of the function $$F(x)=\dfrac{1}{ \prod\limits_{j\neq i} (x-\lambda_j)^{m_j}}.$$
\end{prop}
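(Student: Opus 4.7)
The plan is to verify that the candidate polynomial $T_{\lambda_i}^{m_i-1}(x)P(x)$, with $P(x) := \prod_{j \neq i}(x-\lambda_j)^{m_j}$, satisfies the defining interpolation conditions of the principal Lagrange resolvent $\widehat{T_i}$, and then to conclude by uniqueness from Theorem \ref{main}. The degree check is immediate: $\deg T_{\lambda_i}^{m_i - 1} \leq m_i - 1$ and $\deg P = n - m_i$, so the product has degree at most $n-1$, as required. For the interpolation conditions at each $\lambda_j$ with $j \neq i$, the factor $P$ has a zero of order exactly $m_j$ at $\lambda_j$, so the entire product vanishes to order at least $m_j$ there; in particular $(T_{\lambda_i}^{m_i-1} P)^{(m)}(\lambda_j) = 0$ for all $0 \le m < m_j$, matching the required data.

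The substantive step is the behavior at $\lambda_i$, where I must verify $T_{\lambda_i}^{m_i-1}(\lambda_i)P(\lambda_i) = 1$ and $(T_{\lambda_i}^{m_i-1} P)^{(m)}(\lambda_i) = 0$ for $1 \le m \le m_i - 1$. Since $P(\lambda_i) \neq 0$, the rational function $F(x) = 1/P(x)$ is regular at $\lambda_i$, and by construction its Taylor polynomial $T_{\lambda_i}^{m_i-1}$ agrees with $F$ in value and all derivatives through order $m_i - 1$ at $\lambda_i$. By the Leibniz rule, $(fg)^{(m)}(\lambda_i)$ depends only on the derivatives of $f$ and $g$ at $\lambda_i$ of orders at most $m$, so for every $0 \le m \le m_i - 1$,
\[ \bigl(T_{\lambda_i}^{m_i-1} \cdot P\bigr)^{(m)}(\lambda_i) = (F \cdot P)^{(m)}(\lambda_i), \]
and since $F \cdot P = 1$ identically near $\lambda_i$, the right-hand side equals $1$ when $m = 0$ and $0$ otherwise. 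This is exactly the interpolation data defining $\widehat{T_i}$, so the two polynomials coincide by Theorem \ref{main}.

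The main, though mild, obstacle is interpreting the Taylor polynomial of the rational function $1/P$ over a general field $\mathbb{K}$ of characteristic zero. The cleanest algebraic formulation is that, because $P(\lambda_i) \neq 0$, the polynomial $P$ is a unit in the truncated ring $\mathbb{K}[x]/(x-\lambda_i)^{m_i}$, and its inverse in that ring is exactly the degree $m_i - 1$ Taylor polynomial of $1/P$ at $\lambda_i$, computed via formal derivatives. The Leibniz computation above is then just the assertion that $P \cdot P^{-1} \equiv 1$ in this truncation ring, which is how I would present the key step in the written proof.
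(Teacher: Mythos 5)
Your proof is correct and follows essentially the same route as the paper: check the degree bound, observe that the factor $\prod_{j\neq i}(x-\lambda_j)^{m_j}$ forces all required derivatives to vanish at each $\lambda_j$ with $j\neq i$, verify the data at $\lambda_i$ using the fact that the Taylor polynomial agrees with $F=1/P$ to order $m_i-1$, and conclude by uniqueness. The only cosmetic difference is that you establish the behavior at $\lambda_i$ via the Leibniz rule (equivalently, inverting $P$ in $\mathbb{K}[x]/(x-\lambda_i)^{m_i}$), whereas the paper writes $T_{\lambda_i}^{m_i-1}-F=(x-\lambda_i)^{m_i}G$ and expands the product explicitly; these are the same computation, though your truncated-ring formulation handles the general field $\mathbb{K}$ a bit more carefully than the paper's pole language.
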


\begin{proof} By definition we know that the principal Lagrange resolvent is the Lagrange interpolation polynomial with interpolation data $c_1^{(0)} = 1$ and everything else 0. We proceed by verifying that $\hat{T}_i$ as defined is this interpolation polynomial.
	
	First, note that $\deg \hat{T}_i = n-1$ as required. Then, for $j \neq i$ and $0 \leq m \leq m_j - 1$ we easily see that
	\[ \hat{T}_i^{(m)} (\lambda_j) = 0. \]
	And now we want to see what happens at $\lambda_i$. By definition of the Taylor polynomials we know that
	\[T_{\lambda_i}^{m_1 - 1} (x) - F(x) \]
	has a root at $\lambda_i$ of multiplicity $m_i$. In particular, we write
	\[ T_{\lambda_i}^{m_1 - 1} (x) - F (x) = (x - \lambda_i)^{m_i} G(x) \]
	for some $G(x)$ with no pole at $\lambda_i$. And then we have
	\[ \hat{T}_i (x) = F(x) \prod_{j \neq i} (x - \lambda_j)^{m_j} + G(x) \prod_{j=1}^k (x - \lambda_j)^{m_j} = 1 +  G(x) \prod_{j=1}^k (x - \lambda_j)^{m_j}. \]
	
	Since $G$ does not have a pole at $\lambda_i$, we see right away that $\hat{T}_i (\lambda_i) = 1$ and
	\[ \hat{T}_i^{(m)} (\lambda_i) = 0 \]
	for all $1 \leq m \leq m_i - 1$.
	
	And therefore, $\hat{T}_i$ is indeed the Lagrange interpolation polynomial with data $c_i^{(0)} = 1$ and all others 0.

\end{proof}

\begin{definition} For every vector $v\in V$, the vector $v_i = L_i(A)v$ will be called the principal Lagrange resolvent of $v$ corresponding to the root $\lambda_i$ and the operator  $A$.
\end{definition}

Then we get the following corollary to Theorem \ref{main2}:

\begin{cor}\label{main3} Every vector $v\in V$ is representable as sum of its principle Lagrange resolvents i.e. $v=v_1+\dots+v_k$. Moreover, all non zero Lagrange resolvents of $v$ are linearly independent and satisies $(A-\lambda_i I)^{m_i}v_i=0$.
\end{cor}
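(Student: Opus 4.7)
The plan is to derive all three claims directly from the four properties of the principal Lagrange resolvent operators listed in Theorem \ref{main2}. Since those operators behave like a complete set of orthogonal idempotents whose ``spectral parts'' are annihilated by the corresponding $(A-\lambda_i I)^{m_i}$, the corollary should be essentially a translation of Theorem \ref{main2} into statements about vectors.

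First, I would establish the decomposition $v = v_1 + \cdots + v_k$. This is immediate from Theorem \ref{main2}\ref{one}: applying the identity $L_1(A) + \cdots + L_k(A) = I$ to $v$ gives
\[ v = Iv = L_1(A)v + \cdots + L_k(A)v = v_1 + \cdots + v_k. \]
Next, the relation $(A - \lambda_i I)^{m_i} v_i = 0$ is an immediate consequence of Theorem \ref{main2}\ref{four}, since $(A - \lambda_i I)^{m_i} v_i = (A - \lambda_i I)^{m_i} L_i(A) v = 0$.

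The main (though still short) step is the linear independence of the nonzero $v_i$'s. Here I would exploit the facts that the $L_i(A)$ are pairwise orthogonal idempotents, by applying $L_j(A)$ to an arbitrary vanishing linear relation. Concretely, suppose $\sum_{i \in S} c_i v_i = 0$ where $S = \{i : v_i \neq 0\}$. Applying $L_j(A)$ to both sides for some $j \in S$ and using Theorem \ref{main2}\ref{two} to kill the cross terms $L_j(A) L_i(A)v = 0$ for $i \neq j$, together with Theorem \ref{main2}\ref{three} which yields $L_j(A) v_j = L_j(A)^2 v = L_j(A) v = v_j$, I would obtain $c_j v_j = 0$. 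Since $v_j \neq 0$ by choice of $S$, this forces $c_j = 0$; running $j$ over all of $S$ gives linear independence.

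There is no real obstacle: the whole corollary is a direct consequence of evaluating the polynomial identities of Theorem \ref{main2} on an arbitrary vector $v$. The only thing to be slightly careful about is to restrict the linear independence argument to the indices for which $v_i \neq 0$, so that applying $L_j(A)$ to the relation gives a nontrivial conclusion.
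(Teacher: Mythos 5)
Your proof is correct and follows exactly the route the paper intends: the corollary is stated as an immediate consequence of Theorem \ref{main2} (the paper gives no written proof), and your derivation of the decomposition from \ref{one}, the annihilation from \ref{four}, and the linear independence from the orthogonal-idempotent relations \ref{two} and \ref{three} is the standard argument the authors are implicitly relying on. No gaps.
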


Now we restrict to a special case. Let $V$ be a finite dimensional space and $T$ be the characteristic polynomial of $A$ that splits over $\mathbb K$. By Cayley Hamilton theorem, $T(A)=0$. We consider principle Lagrange resolvents $L_i(A)$ of $A$ with resepct to $T$. Let $E_i$ denote the range of $L_i(A)$, that is, $E_i=\{L_i(A)v: v\in V\}$. Then we have the following.

\begin{theorem}\label{main4} The following are true.
	\begin{enumerate}[label=(\roman*)]
		\item\label{11} $V=E_1\oplus\dots\oplus E_k$,
		\item\label{12} $E_i =\{v\in V: (A-\lambda_i I)^{m_i}v=0\}=\{v\in V: \text{there exists } p\in\bN\text{ such that }(A-\lambda_i I)^{p}v=0\}$,
		\item\label{13} The dimension of $E_i$ is $m_i$, where $m_i$ is the multiplicity of the eigenvalue of $A$.
	\end{enumerate}
\end{theorem}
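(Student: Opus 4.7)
The plan is to prove the three parts in order, leaning on Theorem \ref{main2} throughout and exploiting that $T$ is the characteristic polynomial of $A$ on the finite-dimensional space $V$. For \ref{11}, Theorem \ref{main2}\ref{one} gives $v = L_1(A)v + \dots + L_k(A)v$ for every $v \in V$, so $V = E_1 + \dots + E_k$. To upgrade this to a direct sum, I would suppose $w_1 + \dots + w_k = 0$ with $w_j = L_j(A)u_j \in E_j$, apply $L_i(A)$ to both sides, and use parts \ref{two} and \ref{three} of Theorem \ref{main2}: the cross terms vanish and $L_i^2(A)u_i = L_i(A)u_i = w_i$, forcing $w_i = 0$.

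For \ref{12}, let $K_i = \ker(A - \lambda_i I)^{m_i}$ and $K_i' = \{v \in V : (A - \lambda_i I)^p v = 0 \text{ for some } p \in \bN\}$. The inclusion $E_i \subseteq K_i$ is immediate from Theorem \ref{main2}\ref{four}, and $K_i \subseteq K_i'$ is trivial. For $K_i' \subseteq E_i$, given $v$ with $(A - \lambda_i I)^p v = 0$, I would decompose $v = v_1 + \dots + v_k$ via \ref{11}, where $v_j = L_j(A)v \in E_j$. Since every polynomial in $A$ commutes with $L_j(A)$, each $E_j$ is $A$-invariant, so $(A - \lambda_i I)^p v_j \in E_j$, and by directness $(A - \lambda_i I)^p v_j = 0$ for every $j$. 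For $j \neq i$, $v_j$ is simultaneously annihilated by the coprime polynomials $(x - \lambda_i)^p$ and $(x - \lambda_j)^{m_j}$ (the latter from the inclusion $E_j \subseteq K_j$ just established), so Bezout supplies $P, Q \in \mathbb{K}[x]$ with $P(x)(x - \lambda_i)^p + Q(x)(x - \lambda_j)^{m_j} = 1$; evaluating at $A$ and applying to $v_j$ yields $v_j = 0$. Hence $v = v_i \in E_i$.

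For \ref{13}, each $E_i$ is $A$-invariant and $A|_{E_i} - \lambda_i I$ is nilpotent by \ref{12}, so the characteristic polynomial of $A|_{E_i}$ is $(x - \lambda_i)^{\dim E_i}$. Choosing a basis of $V$ adapted to $V = E_1 \oplus \dots \oplus E_k$ makes the matrix of $A$ block-diagonal, hence $T(x) = \prod_{i=1}^k (x - \lambda_i)^{\dim E_i}$; comparing with the assumed factorization $T(x) = \prod_{i=1}^k (x - \lambda_i)^{m_i}$ and invoking unique factorization gives $\dim E_i = m_i$. The main obstacle is the "there exists $p$" direction of \ref{12}, which requires combining the decomposition of \ref{11} with Bezout to cover the case $p > m_i$; the rest consists of routine applications of Theorem \ref{main2} together with the standard block-diagonal formula for the characteristic polynomial.
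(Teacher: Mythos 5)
Your proof is correct, and its skeleton matches the paper's: resolvent identities give the direct sum, then one identifies $E_i$ with the generalized eigenspace and counts dimensions. The interesting divergence is in how the two nontrivial steps are discharged. For the inclusion $\{v : (A-\lambda_i I)^p v = 0 \text{ for some } p\} \subseteq E_i$, the paper argues that the minimal annihilating polynomial $(t-\lambda_i)^p$ of such a $v$ divides the characteristic polynomial $T$, forcing $p \le m_i$; this quietly invokes the fact that the local minimal polynomial of a vector divides the characteristic polynomial (a Cayley--Hamilton-adjacent fact the paper asserts ``by definition''). You instead decompose $v = \sum_j v_j$ along the already-established direct sum and kill each $v_j$ with $j \ne i$ by a Bezout argument on the coprime polynomials $(x-\lambda_i)^p$ and $(x-\lambda_j)^{m_j}$; this is more self-contained and avoids any appeal to minimal-polynomial divisibility. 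For the dimension count, the paper derives the inequality $\dim E_i \le m_i$ from divisibility of characteristic polynomials of restrictions and then squeezes using $\sum_i \dim E_i = n = \sum_i m_i$, whereas you read off the exact block-diagonal factorization $T(x) = \prod_i (x-\lambda_i)^{\dim E_i}$ and compare exponents; these are essentially equivalent in content, with yours giving the equality in one step and the paper's requiring the global count to close the gap. Both routes are valid; yours trades the paper's reliance on properties of annihilating polynomials for the direct-sum structure you have already built, which is arguably the cleaner use of the resolvents.
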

\begin{proof} Corollary \ref{main3} is equivalent to $V=E_1\oplus\dots\oplus E_k$ and $E_i\subseteq\{v\in V: (A-\lambda_i I)^{m_i}v=0\}$. Clearly, $V_i=\{v\in V: (A-\lambda_i I)^{m_i}v=0\}$ is a subspace of $V$ and also invariant under the action of $A$. So the characteristic polynomial of $A$ restricted to $V_i$ divides characteristic polynomial $T$ of $A$. For every $v\in V_i$, an annihilating polynomial of $v$ is of form $(t-\lambda_i)^p$, so the characteristic polynomial of $A$ restricted to $V_i$ is also of form $(t-\lambda_i)^p$. Since $(t-\lambda_i)^p$ divides $T$, we have $p\leq m_i$. So dimension of $E_i$ and $V_i$ are less than equal to $m_i$. Since $V=E_1\oplus\dots\oplus E_k$ and $\sum\limits_{i=1}^k m_i=n$, we must have dimension of $E_i$ and $V_i$ both equal to $m_i$. This proves $\labelcref{11}$ and $\labelcref{13}$. It also proves $E_i=V_i$. Now let $v\in V$ such that there exists $p\in\bN$ such that $(A-\lambda_i I)^{p}v=0$. Let $(t-\lambda_i)^p$ itself be the annihilating polynomial of $v$. By definition, $(t-\lambda_i)^p$ divides characteristic polynomial $T$ of $A$. Hence $p\leq m_i$. So $(A-\lambda_i I)^{m_i}v=0$. Hence $v\in E_i$. This completes the proof of the theorem.
\end{proof}

Now we are ready to prove Jordan decomposition theorem for matrices whose chracteristic polynomial splits. We need the following definition.

\begin{definition} Let $A$ be a linear operator on a vector space $V$. Let $v\in V$ such that there exists $m\in\bN$ such that $(A-\lambda I)^m(v)=0$, then $v$ is said to be a generalized eigenvector of $A$ corresponding to the eigenvalue $\lambda$. Suppose that $p$ is the smallest positive integer for which $(A - \lambda I)^p(v) = 0$. Then the set $\{v, (A - \lambda I)(v), (A - \lambda I)^2(v), \dots (A - \lambda I)^{p-1}(v)\}$ is called a cycle of generalized eigenvectors of $A$ corresponding to $\lambda$. generated by $v$.
\end{definition}
The proof of the following lemma is straightforward.
\begin{lemma} Let $v_1, \dots,v_k$ be linearly independent generalized eigenvector of $A$ corresponding to the eigenvalue $\lambda$. Then the union of sets of cycle generalized eigenvectors of generated by $v_i$ is linearly independent.
\end{lemma}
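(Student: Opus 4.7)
My plan is to argue by induction on $m := \max_{1 \leq i \leq k} p_i$, where $p_i$ denotes the smallest positive integer with $(A - \lambda I)^{p_i} v_i = 0$; set $N := A - \lambda I$. The base case $m = 1$ is immediate: every cycle reduces to the singleton $\{v_i\}$, so the union is exactly $\{v_1, \dots, v_k\}$, linearly independent by hypothesis.

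For the inductive step, I would start from an assumed linear relation
\[
\sum_{i=1}^{k} \sum_{j=0}^{p_i - 1} c_{ij}\, N^{j} v_i \;=\; 0
\]
and apply $N$ to both sides. Because $N^{p_i} v_i = 0$, the top term of each cycle (the one with $j = p_i - 1$) disappears, leaving
\[
\sum_{i\,:\, p_i \geq 2}\ \sum_{j=0}^{p_i - 2} c_{ij}\, N^{j+1} v_i \;=\; 0.
\]
This is a linear relation among the union of cycles generated by $N v_i$ for those $i$ with $p_i \geq 2$, whose cycle lengths are now at most $m - 1$. If the inductive hypothesis applies to this reduced collection, it forces $c_{ij} = 0$ for all $j \geq 1$ in the original relation, which collapses to $\sum_{i} c_{i,0}\, v_i = 0$, and linear independence of the $v_i$ finishes the argument.

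The hard part is verifying that the reduced family $\{N v_i : p_i \geq 2\}$ is itself a linearly independent set of generalized eigenvectors, so the inductive hypothesis actually applies. I would handle this by looking instead at the bottom vectors $N^{p_i - 1} v_i$: in the context where this lemma is used (constructing cycle bases inside each generalized eigenspace $E_i$ of Theorem \ref{main4}), the $v_i$ are chosen precisely so that these bottoms are linearly independent in $\ker N$. From this refinement of the hypothesis, a one-step descent shows that $\{N v_i : p_i \geq 2\}$ inherits the same property (its bottoms coincide with the bottoms of the original cycles that had length $\geq 2$), and in particular remains linearly independent. With that ingredient in place the induction closes and the lemma follows.
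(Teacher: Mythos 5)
The paper offers no proof to compare against: it simply declares the lemma ``straightforward.'' In fact the lemma as literally stated is false, and your third paragraph correctly puts its finger on why. Take $V = \mathbb{K}^2$ with $N = A - \lambda I$ given by $Ne_1 = 0$, $Ne_2 = e_1$ (a single $2\times 2$ Jordan block), and let $v_1 = e_2$, $v_2 = e_1 + e_2$. These are linearly independent generalized eigenvectors, each generating a cycle of length $2$, but the union of the two cycles is $\{e_2, e_1, e_1+e_2\}$, which is dependent. Linear independence of the generating vectors $v_i$ is not the right hypothesis; what is needed (and what is actually available at the point where the lemma is invoked) is linear independence of the \emph{bottom} vectors $N^{p_i-1}v_i$ in $\ker N$. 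Your proposal to replace the stated hypothesis by this one is exactly the standard repair (it is the form in which the statement appears in standard linear algebra texts), and with it your induction on $m = \max_i p_i$ does close: applying $N$ turns the given relation into a relation on the cycles generated by the $Nv_i$ with $p_i \ge 2$, whose bottoms are the same vectors $N^{p_i-1}v_i$ and hence still independent, so the inductive hypothesis applies.

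One bookkeeping slip in your second paragraph: after applying $N$, the coefficients that survive in the reduced relation are $c_{ij}$ for $0 \le j \le p_i - 2$, so the inductive hypothesis kills precisely those --- not ``all $j \ge 1$.'' The original relation therefore collapses to $\sum_i c_{i,\,p_i-1} N^{p_i-1} v_i = 0$, a relation among the bottom vectors, and it is their independence (your strengthened hypothesis), not the independence of the $v_i$ themselves, that finishes the argument. With that correction your argument is complete and, unlike the paper's, actually proves a true statement; the paper's version of the lemma should be restated with the hypothesis you identified.
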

\begin{theorem} Let $A$ be a linear operator on a finite-dimensional vector space $V$ such that characteristic polynomial of $A$ splits. Then for every eigenvalue $\lambda$ of $A$, there exists a basis of $E_{\lambda}$ consisting of cycle generalized eigenvectors $A$ corresponding to $\lambda$.
\end{theorem}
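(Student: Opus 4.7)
The plan is to reduce the statement to a claim about nilpotent operators and prove it by induction on dimension. First, by Theorem~\ref{main4}\labelcref{12}, the restriction $N := (A - \lambda I)|_{E_\lambda}$ is nilpotent of index at most $m_\lambda$, so a cycle of generalized eigenvectors of $A$ at $\lambda$ generated by $v$ is precisely the $N$-orbit $\{v, Nv, \ldots, N^{p-1}v\}$ where $p$ is the smallest integer with $N^p v = 0$. Thus the theorem reduces to: if $N$ is a nilpotent operator on a finite-dimensional space $E$, then $E$ admits a basis that is a disjoint union of such $N$-orbits.

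I would proceed by induction on $\dim E$. The base case $\dim E = 0$ is trivial. For the inductive step, let $W := N(E)$, which is a proper $N$-invariant subspace since $N$ is nilpotent. By the induction hypothesis applied to $N|_W$, the subspace $W$ has a basis arising from cycles $C_1, \ldots, C_r$ with generators $v_1, \ldots, v_r$ and lengths $p_1, \ldots, p_r$. For each $i$, pick a preimage $w_i \in E$ with $N w_i = v_i$, and form the enlarged cycle
\[
C_i' := \{w_i,\, N w_i,\, \ldots,\, N^{p_i} w_i\},
\]
a cycle in $E$ of length $p_i + 1$. The terminal vectors $N^{p_i - 1} v_i = N^{p_i} w_i$ lie in $\ker N$ (because each $C_i$ is a cycle of minimal length in $W$) and are linearly independent there, as they are elements of a basis of $W$. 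I would then extend them to a basis of $\ker N$ by adjoining vectors $u_1, \ldots, u_t$, each of which is a length-$1$ cycle on its own.

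The candidate basis is $B := \bigcup_i C_i' \cup \{u_1, \ldots, u_t\}$. A dimension check using rank-nullity works out cleanly: $|B| = \sum_i (p_i + 1) + t = \dim W + (r + t) = \dim W + \dim \ker N = \dim E$. For linear independence, I would suppose $\sum_{i,j} \alpha_{i,j} N^j w_i + \sum_\ell \beta_\ell u_\ell = 0$ and apply $N$; using $N w_i = v_i$, this reduces to a relation among the cycle basis of $W$, forcing $\alpha_{i,j} = 0$ for $j < p_i$. The residual relation then lies in $\ker N$ and involves only the terminal vectors together with the $u_\ell$, which by construction form a basis of $\ker N$; so the remaining coefficients also vanish.

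The main obstacle is the bookkeeping in this linear independence step: one must apply $N$ at exactly the right moment, use $N w_i = v_i$ to re-index the pushed-down relation as one among the inductive cycle basis of $W$, and separately exploit that the cycle terminals together with $\{u_\ell\}$ are a chosen basis of $\ker N$. Once this is in place, applying the lemma to $E = E_\lambda$ with $N = (A - \lambda I)|_{E_\lambda}$ and translating $N$-orbits back into cycles of generalized eigenvectors of $A$ yields the theorem.
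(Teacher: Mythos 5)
Your proof is correct, and it takes a genuinely different---and substantially more careful---route than the paper. You give the standard induction on dimension for the nilpotent operator $N=(A-\lambda I)|_{E_\lambda}$: pass to the image $W=N(E)$, lift each cycle of $W$ by one step through a chosen preimage of its generator, and complete the terminal vectors to a basis of $\ker N$; the rank--nullity count and the ``apply $N$, then look at what remains in $\ker N$'' independence argument you outline both go through exactly as you describe. The paper instead takes a maximal linearly independent set $\mathcal{B}$ of generalized eigenvectors and asserts that maximality forces it to span $E_\lambda$. That observation is essentially vacuous (every vector of $E_\lambda$ is a generalized eigenvector, so any basis qualifies) and does not by itself organize the basis into cycles; it implicitly leans on the paper's preceding lemma that the union of the cycles generated by linearly independent generalized eigenvectors is linearly independent, which is false in general: for a single Jordan block with $Ne_2=e_1$, $Ne_1=0$, the vectors $e_2$ and $e_1+e_2$ are independent, yet the union of their cycles is $\{e_2,\,e_1,\,e_1+e_2\}$, which is dependent. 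So your argument supplies the inductive construction that the paper's sketch is missing, at the cost of the bookkeeping you identify; the paper's approach would be shorter if it could be repaired, but as written it has a gap that your proof does not.
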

\begin{proof} Consider the maximal set $\mathcal B$ of linearly independent generalized eigenvectors of $A$ corresponding to eigenvalue $\lambda$. And the claim is that it is basis for $E_{\lambda}$, else there will be a generalized eigenvectors not belonging to $E_{\lambda}\setminus\text{span}(\mathcal B)$, which will contradict maximality of $\mathcal B$.
\end{proof}

And finally using Theorem \ref{main4} and the basis of  $E_{\lambda}$ consisting of cycle generalized eigenvectors $A$ corresponding to $\lambda$, the matrix represntation of $A$ is the Jordan decomposition of matrix $A$.

%%%%%%%%%%%%%%%%%%%%%%%%%
\section{Remarks}\label{remarks}

\textbf{Remark 1}. Corollary \ref{neww} also holds for any meromorphic function $F$ such that $\lambda_1, \dots, \lambda_k$ are not poles of $F(x)$.

\textbf{Remark 2}. Let $A$ and $B$ be linear operators on a vector space $V$ over $\mathbb K$ such that $A$ and $B$ satisfies a polynomial $T(x)$ and $Q(x)$ respectively, that splits over $\mathbb K$. If we consider $L_{i, j} = \widehat{T_i}(A)\widehat{Q_j}(B)$, where $\widehat{T_i}(A)$ and $\widehat{Q_j}(B)$ are principle Lagrange resolvent of the operator $A$ and $B$ corresponding to the polynomial $T$ and root $\lambda_i$, and $Q$ with root $\mu_j$ respectively. Then we get that  Proposition 1.1.6 of \cite{prof} holds even when $T(x)$ and $Q(x)$ has roots with multiplicity greater than $1$. As an application, we get a direct proof of following theorem.

Let $A_1, \dots, A_n$ be commuting family of  linear operators on a vector space $V$ over $\mathbb K$ that satisfies polynomials that split over $\mathbb K$. Then $V$ can be written as direct sum of subspaces, which are simultaneously contained in generalized eigenspaces of $A_1, \dots, A_n$.

We note that the above theorem is a generalization of theorem that for a commuting family $\mathcal F$ of linear diagonalizable on a finite dimensional vector space $V$, there exists an ordered basis of $V$ in which all operators in $\mathcal F$ are represented by diagonal matrices.

\end{document}